\newcommand{\ncm}{\newcommand}
\newtheorem{theorem}{Theorem}[section]
\newtheorem{prop}[theorem]{Proposition}
\newtheorem{lemma}[theorem]{Lemma}
\newtheorem{cor}[theorem]{Corollary}
\newtheorem{lem&def}[theorem]{Lemma \& Definition}
\newtheorem{definition}[theorem]{Definition}
\newtheorem{example}[theorem]{Example}
\def\E{\mathcal{E}}
\def\C{\mathbb{C}\,} 
\def\Z{\mathbb{Z}\,}
\ncm{\End}{\mbox{\rm End}\,}
\def\Hom{\mbox{\rm Hom}\,}
\def\simples{\mbox{\rm Simples}\,}
\def\|{\, | \,}
\def\h{\, \stackrel{h}{\sim}\,}
\def\ann{\mbox{\rm ann}\,}
\def\id{\mbox{\rm id}}
\def\into{\hookrightarrow}
\def\to{\rightarrow}
\def\o{\otimes}    
\ncm{\rarr}[1]{\stackrel{#1}{\longrightarrow}}
\ncm{\larr}[1]{\stackrel{#1}{\longleftarrow}}
\def\eps{\varepsilon}
\def\-2{_{(-2)}}
\def\-1{_{(-1)}}
\def\0{_{(0)}}
\def\1{_{(1)}}
\def\2{_{(2)}}
\def\3{_{(3)}}
\def\du1{\hat 1}
\begin{document}
\title[Ideal depth of QF extensions]{Ideal depth of QF extensions}
\author{Lars Kadison} 
\address{Departamento de Matematica \\ Faculdade de Ci\^encias da Universidade do Porto \\ 
Rua Campo Alegre, 687 \\ 4169-007 Porto} 
\email{lkadison@fc.up.pt } 
\thanks{\textit{In memory of Gerhard Hochschild}.}
\subjclass{}  
\date{} 

\begin{abstract}
A minimum depth $d^I(S \to R)$ is assigned
to a ring homomorphism $S \to R$ and a bimodule ${}_RI_R$.  
The recent notion of depth of a subring $d(S,R)$
in a paper by Boltje-Danz-K\"ulshammer is recovered when $I = R$ and $S \to R$ is the inclusion mapping.  Ideal depth gives  lower bounds for $d(S,R)$ in case of group algebra pair or semisimple
complex algebra extensions. 
If $R \| S$ is a QF extension of finite depth,  minimum left and right even depth are shown to coincide.    If $R \supseteq S$
is moreover a Frobenius extension with $R_S$ a generator, its subring depth is shown to coincide with its tower depth.   In the process formulas for the ring, module, Frobenius
and Temperley-Lieb structures are provided for the tensor product tower above a Frobenius extension.  
A depth $3$ QF extension is embedded in a depth $2$ QF extension; in turn certain
depth $n$ extensions embed in depth $3$ extensions if they are 
Frobenius extensions or other special ring extensions with ring structures on their relative Hochschild bar resolution groups. 
\end{abstract} 
\maketitle

\section{Introduction and Preliminaries}

Algebras, coalgebras and Hopf algebras are some of the interesting objects with structure
in representation categories of commutative rings.  In the representation category of a noncommutative ring, these objects become ring extensions, corings and Hopf algebroids.  Some
basic algebras of interest are the cohomological dimension $0$ and $\infty$ cases of separable algebra and Frobenius algebra; which become separable extensions and Frobenius extensions in  noncommutative representation theory.  Also, QF rings, semisimple rings, and Azumaya algebras
generalize to ring extensions; however depth is not such a notion, originating as a tool of induced representation theory.  Depth is essentially constant
on (especially projective) algebras over a commutative ring, but gives different and interesting
 outcomes for ring extensions.  

The depth of many subgroups are recently computed, both as induced complex  representations \cite{BKK} and as induced representations over general commutative rings of group algebras \cite{BDK}.  
For example, the depth of the permutation groups $S_n \subset S_{n+1}$ is $2n-1$ over any ground ring and depends only on a combinatorial depth of subgroups defined in terms of bisets in \cite{BDK}.  The authors of \cite{BDK} show that combinatorial
depth $d_c(H,G)$ of a subgroup $H$ in a finite group $G$ satisfies $d_c(H,G) \leq 2n $ for $n \geq 1$ (respectively, $d_c(H,G) \leq 2n-1$  for $n > 1$) $\Leftrightarrow$ for any $x_1,\ldots,x_n \in G$, there is $y_1,\ldots,y_{n-1} \in G$ such that
$H \cap_{i=1}^n x_iHx_i^{-1}  = H \cap_{i=1}^{n-1} y_iHy_i^{-1} $ (respectively, the latter condition
and additionally $x_1 h x_1^{-1} = y_1 h y_1^{-1}$, all $h \in H \cap_{i=1}^n x_iHx_i^{-1} $).  All notions of depth $\leq 2$ 
are the same and occur precisely if $H$ is a normal subgroup.  However, depth of subalgebras
over base rings (of varying characteristic denoted by a subscript) for $R = k[G]$ and $S = k[H]$ and combinatorial depth diverge in a string of inequalities given in \cite{BDK} as follows:
\begin{equation}
d_0(H,G) \leq d_p(H,G) \leq d_{\Z}(H,G) \leq d_c(H,G) \leq 2[G: N_G(H)].
\end{equation}
Also $d_k(H,G) \leq d_c(H,G)$ showing that all extensions of finite dimensional group algebras have finite depth.  

The authors begin in \cite{BDK} with a new notion of subring depth $d(S,R)$,
given below in (\ref{eq: d2n+1}).  They show in an appendix how it is based on and equal to a previous notion where $S$ and $R$ are semisimple complex algebras given below in (\ref{eq: dn inequality}).  Such
a pair $R \supseteq S$ is a special case of (split
separable) Frobenius extensions; in Theorem~\ref{th-towerd=subrd} below we show that subring depth is equal to  tower depth of Frobenius extensions \cite{K2008} satisfying a generator module condition. 
The authors of \cite{BDK} define a left and right even depth and show these are the same on group algebra extensions;  Theorem~\ref{th-rightleft} below shows this equality holds for all QF extensions.  

In this paper an obvious change is made to the definition of subring depth; we define an $I$-depth $d^I(S \to R)$ of a ring homomorphism $S \to R$ with $R$-bimodule
$I$, which we use in place of $R$ in the n-fold tensor products over $S$ in the definition (\ref{eq: d2n+1}) of $d(S,R)$ (as well as a converse, automatic in the presence of units).  When $I$ is an ideal of a semisimple
complex algebra $R$ with semisimple subalgebra $S$ the $I$-depth $d^I(S \to R)$ gives a lower bound, $d^I(S \to R) \leq d(S,R)$ discussed in Section~2 in terms of the part of the bipartite graph of the inclusion which is directly below the ideal $I$.  

There are tantalizing similarities and intriguing relations between relative homological algebra and the subring depth definition and theory.  For example, the depth two condition on a subring $S \subseteq R$
leads  in \cite{K2007} to an isomorphism of differential graded algebras between the relative Hochschild $R$-valued cochains
with cup product and the Amitsur complex of a coring with grouplike element (on the endomorphism ring $\End {}_SR_S$ over the centralizer subring $R^S$). Also the paper \cite{Muenster} contains some relations between depth $2$ and notions of relative homological algebra carried over to corings
in \cite{BW}.  The tower of iterated endomorphism rings above
a ring extension becomes in the case of Frobenius extensions a tower of rings on the bar resolution
groups $C_n(R,S)$ ($n = 0, 1,2,\ldots$) with Frobenius and Temperley-Lieb structures explicitly calculated from their more usual iterative definition in Section~\ref{subsect-tower}.  At the same time
Frobenius extensions of depth more than $2$ are known to have depth $2$ further out in the tower:
we extend this observation in \cite{K2008} with new proofs to include other ring extensions satisfying the hypotheses of Proposition~\ref{prop-d3}.

\subsection{H-equivalent modules} Let $R$ be a ring.  Two left $R$-modules, ${}_RN$ and ${}_RM$, are said to be \textit{h-equivalent}, denoted
by ${}_RM \h {}_RN$ if two
conditions are met.  First, 
for some positive integer $r$,  $N$ is isomorphic to a direct summand in the direct sum of $r$ copies of $M$, denoted by 
\begin{equation}
\label{eq: oplus}
{}_RN \oplus * \cong {}_RM^r \  \  \  \Leftrightarrow \  \  \  N \| M^r \  \Leftrightarrow 
\end{equation}
$$ \exists f_i \in \Hom ({}_RM,{}_RN),  \ g_i \in \Hom ({}_RN, {}_RM), i = 1,\ldots,r\ : \ \sum_{i=1}^r f_i \circ g_i = \id_N $$
 Second, symmetrically there is $s \in \Z_+$ such that $M \| N^s$. 
It is easy to extend this definition of h-equivalence (sometimes referred to as similarity) to h-equivalence of two objects in an abelian category, and to show that it is an equivalence relation.   

If two modules are h-equivalent, ${}_RN \h {}_RM$, then they have Morita equivalent endomorphism rings,  ${\E}_N := \End {}_RN$ and
${\E}_M := \End {}_RM$.  This is quite easy to see since a Morita context of bimodules are given by $H(M,N) := \Hom ({}_RM,{}_RN)$, which is an
${\E}_N$-${\E}_M$-bimodule via composition, and the bimodule ${}_{{\E}_M}H(N,M)_{{\E}_N}$; these are progenerator modules, by  
applying to (\ref{eq: oplus}) or its reverse, $M \| N^s$, any of the four Hom-functors such as $\Hom ({}_R-, {}_RM)$ from the category of left $R$-modules into the category of left 
$E_M$-modules showing that ${}_{{\E}_M}H(N,M)$ is finite projective; similarly, generator.  Then the explicit conditions on mappings for h-equivalence  show
that $H(M,N) \otimes_{{\E}_M} H(N,M) \to {\E}_N$ and the reverse mapping  given by composition are both bimodule isomorphisms as required.
Since ${\E}_M$ and ${\E}_N$ are Morita equivalent rings, their centers are isomorphic: 
$$ \End {}_RM_{{\E}_M} \cong \End {}_RN_{{\E}_N}. $$ 

The theory of h-equivalent modules applies to bimodules ${}_TM_S \h {}_TN_S$ by letting $R = T \o_{\Z} S^{\rm op}$ which sets up an equivalence of abelian categories between $T$-$S$-bimodules and left $R$-modules. 
 Two additive functors $F, G : \mathcal{C} \into \mathcal{D}$ are
h-equivalent if there are natural split epis $F(X)^n \to G(X)$ and $G(X)^m \to F(X)$ for all $X$
in $\mathcal{C}$.  We leave the proof of the lemma below as an elementary exercise. 

\begin{lemma}
\label{lem-h}
Suppose two $R$-modules are h-equivalent,  $M \h N$ and two additive functors from $R$-modules to an abelian
 category are h-equivalent, $F \h G$.  Then $F(M) \h G(N)$. 
\end{lemma}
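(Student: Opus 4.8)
The plan is to unwind both h-equivalences into the elementary divisibility relation $\|$ and then chain them, exploiting the fact that additive functors respect $\|$. First I would record the two hypotheses in explicit form. The assumption ${}_RM \h {}_RN$ furnishes positive integers $r,s$ with $N \| M^r$ and $M \| N^s$. The assumption $F \h G$, by the definition of h-equivalent functors via natural split epis, furnishes positive integers $n,m$ such that for every $R$-module $X$ one has $G(X) \| F(X)^n$ and $F(X) \| G(X)^m$; in particular this holds at $X = M$ and at $X = N$.

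The single structural observation I need is that any additive functor $T$ from $R$-modules to an abelian category respects $\|$: if $A \oplus * \cong B$, then applying $T$ and using $T(A \oplus *) \cong T(A) \oplus T(*)$ gives $T(A) \| T(B)$, and likewise $T(B^k) \cong T(B)^k$ for every $k$. Combined with the evident transitivity of $\|$ (a direct summand of a direct summand is a direct summand) and the fact that $A \| B$ implies $A^k \| B^k$, this lets me push modules through $F$ and $G$ freely.

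Now I would assemble the two required divisibilities. For the first, apply the additive functor $G$ to $N \| M^r$ to obtain $G(N) \| G(M)^r$; then raise $G(M) \| F(M)^n$ to the $r$-th power, getting $G(M)^r \| F(M)^{nr}$, and conclude $G(N) \| F(M)^{nr}$ by transitivity. Symmetrically, apply the additive functor $F$ to $M \| N^s$ to obtain $F(M) \| F(N)^s$; raise $F(N) \| G(N)^m$ to the $s$-th power, getting $F(N)^s \| G(N)^{ms}$, and conclude $F(M) \| G(N)^{ms}$ by transitivity. Together these two statements are exactly the assertion $F(M) \h G(N)$.

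Since every step is routine bookkeeping of direct summands and indices, there is no genuine obstacle here; the only point requiring care is to invoke the functor h-equivalence at the correct object (at $M$ in the first chain and at $N$ in the second) so that the two resulting divisibilities land on the modules $F(M)$ and $G(N)$ named in the conclusion, rather than on $F(N)$ or $G(M)$.
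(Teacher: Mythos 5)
Your proof is correct and is exactly the argument the paper intends: the lemma is stated there with its proof explicitly left as an elementary exercise, and your unwinding of both h-equivalences into the relation $\|$, pushing summands through the additive functors $F$ and $G$, and chaining by transitivity of $\|$ is the standard route, with the two chains correctly anchored at $X=M$ and $X=N$ so that the conclusion lands on $F(M) \h G(N)$. One mild remark: your argument uses only the pointwise split surjections at the two objects $M$ and $N$, so the naturality built into the paper's definition of h-equivalent functors is not actually needed here.
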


For example, the following substitution in equations involving the $\h$-equivalence relation follows
from the lemma: \begin{equation} \label{eq: lem'}
{}_RP_T \h {}_RQ_T \ \ \ \ {}_TU_S \h {}_TV_S \ \Rightarrow \  {}_RP \otimes_T U_S \h {}_RQ \o_T V_S 
\end{equation}  

\begin{example}  
\label{ex-semisimplicity}
\begin{rm}
If $R$ is a semisimple artinian ring with simples $\{ P_1,\ldots,P_t \}$ (representatives from each isomorphism class), all finitely generated modules
$M_R$ and $N_R$ have a unique factorization into simple components.  Denote the simple constituents
of $M_R$ by $\simples(M) = \{ P_i \| [P_i,M] \neq 0 \}$ where $[P_i,M]$ is the number of factors
in $M$ isomorphic to $P_i$.  Then $M \| N^q$ for some positive $q$ if $\simples(M) \subseteq \simples(N)$; and $M \h N$ iff $\simples(M) = \simples(N)$.  

Suppose $R$ has central primitive idempotents $e_1,\ldots, e_t$ such that each $[P_i, e_i R] = n_i$, so that
$R$ decomposes into the product of matrix rings over each of the division rings $D_i := \End (P_i)_R$:  $R \cong M_{n_1}(D_1) \times \cdots \times M_{n_t}(D_t)$.  If $M$ and $N$ are h-equivalent f.g.\ $R$-modules, then the endomorphism rings $E_M$ and $E_N$ are explicitly Morita equivalent as they are both products of matrix rings over the same subset of 
division rings $D_1,\ldots,D_t$.
\end{rm} 
 \end{example}

\begin{example}
\begin{rm}
Via some more category theory,  we may see that positive integers $n$ and $m$
are h-equivalent if $n \| m^r$ and $m \| n^s$ for some positive integers $r,s$; whence there are primes $p_1,\ldots,p_k$
such that $n$ and $m$ lie in the same h-equivalence class $\{  p_1^{r_1}\cdots p_k^{r_k} \|  r_1,\ldots,r_k \geq 1 \}$.  This explains the notation for eq.~(\ref{eq: oplus}).
\end{rm} 
\end{example}

\subsection{Depth two} A subring pair $S \subseteq R$ is said to have left depth $2$ (or be a left depth two
extension \cite{KS}) if $R \o_S R \h R$ as natural $S$-$R$-bimodules.  Right depth $2$ is defined
similarly in terms of h-equivalence of natural $R$-$S$-bimodules.  In \cite{KS} it was noted
that the left condition implies the right and conversely if $R$ is a Frobenius extension of $S$.  Also in \cite{KS} a Galois
theory of Hopf algebroids was defined on the endomorphism ring $H := \End {}_SR_S$ as total ring
and the centralizer $C := R^S$ as base ring.  The antipode is the restriction of the natural anti-isomorphism
stemming from following the arrows, $$\End R_S \stackrel{\cong}{\longrightarrow} R \o_S R \stackrel{\cong}{\longrightarrow} (\End {}_SR)^{\rm op}.$$ 

The Galois properties may then be summarized by  the invariants under the obvious action of $H$, $R^H = S$ if $R_S$ is faithfully flat, and $\End R_S \cong R \# H$ a smash product product ring structure on $R \o_C H$:  the details are in \cite{KS}. There is also a duality structure by going a step further along
in the tower above $S \subseteq R \into \End R_S \into \End R \o_S R_R$, where the dual Hopf algebroid $H' := (R \o_S R)^S$ plays a role \cite{KS}.

Conversely, Galois extensions have depth $2$, which is most easily seen from the Galois map
of an $H$-comodule algebra $A$ with invariant subalgebra $B$ and finite dimensional Hopf algebra $H$ over a base field $k$, which is given
by $A \o_B A \stackrel{\cong}{\longrightarrow} A \o_k H$, $ a' \o a \mapsto a' a\0 \o a\1$,
whence $A \o_B A \cong A^{\dim H}$ as $A$-$B$-bimodules.  The Hopf subalgebras within
a finite dimensional Hopf algebra which have depth $2$ are precisely the normal Hopf subalgebras;
if normal, it has depth $2$ by applying the Hopf-Galois observation just made.  The converse
follows from an argument discovered by \cite[Boltje-K\"ulshammer]{BK} which divides the normality
notion into right and left just like depth $2$, where left normal is invariance under the left adjoint action.
Note their argument given in the context of any augmented algebra $A$ (such as a quasi-Hopf algebra) next.
Let $\eps: A \to k$ be the algebra homomorphism into a base ring $k$.  Let $A^+$ denote
$\ker \eps$, and for a subalgebra $B \subseteq A$, let $B^+$ denote $\ker \eps \cap B$.  
For example, it may be shown that if a (quasi-)Hopf algebra $H$ has left normal (quasi-)Hopf subalgebra, then $HK^+ \subseteq K^+H$.

\begin{prop}
Suppose $B \subseteq A$ is a subalgebra of an augmented algebra. If $B \subseteq A$ has
right depth $2$, then $AB^+ \subseteq B^+A$.
\end{prop}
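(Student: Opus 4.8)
The plan is to use right depth two only through its weaker half: from $A \o_B A \h A$ as $A$-$B$-bimodules one extracts the split relation $A \o_B A \| A^n$, i.e.\ $A$-$B$-bimodule maps $f_i \colon A \to A \o_B A$ and $g_i \colon A \o_B A \to A$ with $\sum_{i=1}^n f_i \circ g_i = \id_{A \o_B A}$. I would then push this relation through the additive functor $k \o_A (-)$, where $k$ is viewed as a right $A$-module via $\eps$. Since $f_i, g_i$ are in particular left $A$-linear and right $B$-linear, their images are right $B$-module maps, and applying the functor to the idempotent equation (an elementary instance of Lemma~\ref{lem-h}) gives $k \o_A (A \o_B A) \| (k \o_A A)^n$ as right $B$-modules.

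The next step is to identify the two images. On one side $k \o_A A \cong k$ as right $B$-modules, with $B$ acting through $\eps$; write $k_\eps$ for this trivial module. On the other side, associativity together with the standard isomorphism $k \o_B A \cong A / B^+A$ yields $k \o_A (A \o_B A) \cong (k \o_A A) \o_B A \cong k \o_B A \cong A/B^+A$, where the surviving right $B$-action is right multiplication on $A/B^+A$. Substituting into the relation above gives $A/B^+A \| k_\eps^{\, n}$ as right $B$-modules.

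Finally, since each $b \in B$ acts on $k_\eps^{\, n}$ as the scalar $\eps(b) \in k$, it acts as $\eps(b)\,\id$ on any $B$-submodule, in particular on a summand isomorphic to $A/B^+A$. Hence the right $B$-action on $A/B^+A$ is trivial: $\overline{x}\,b = \eps(b)\,\overline{x}$ for all $x \in A$ and $b \in B$. Choosing $b \in B^+$ forces $xb \in B^+A$ for every $x \in A$, which is precisely $AB^+ \subseteq B^+A$; note that no field hypothesis is needed, the argument running over the given base ring $k$. The only real care is in the bookkeeping of the $A$-left versus $B$-right sides, so that the functor genuinely lands in right $B$-modules and the identification of $k \o_A (A \o_B A)$ with $A/B^+A$ respects that action; once the sides are fixed the conclusion is immediate, so I expect this bookkeeping, rather than any substantive estimate, to be the main obstacle.
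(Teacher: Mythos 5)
Your argument is correct and is essentially the paper's own proof: the paper likewise applies the additive functor $k_{\eps} \o_A -$ to the splitting $A \o_B A \| A^q$ of $A$-$B$-bimodules, identifies the result as $A/B^+A \| k^q$ of right $B$-modules, and concludes via the annihilator of $k^q$ (your ``acts as $\eps(b)\,\id$ on the summand'' is the same annihilator observation). The extra bookkeeping you flag is handled the same way in the paper, so there is nothing genuinely different here.
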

\begin{proof}
To $A \o_B A \| A^q$ as $A$-$B$-bimodules, apply the additive functor $k_{\eps} \o_A -$, which results
in $A/ B^+A \| k^q$ as right $B$-modules.  The annihilator of $k^q$ restricted to $B$ is of course
$B^+$, which then also annihilates $A/ B^+ A$, so $AB^+ \subseteq B^+A$.   
\end{proof}

The opposite inclusion is of course satisfied by a left depth $2$ extension of augmented algebras.  

Also subalgebra pairs of semisimple complex algebras have depth $2$ exactly when they are normal
in a classical sense of Rieffel. We note the theorem in \cite{BKK} below and give a new proof in one
direction along the lines of the previous proposition.

\begin{theorem}\cite[Theorem 4.6]{BKK}
Suppose $B \subseteq A$ is a subalgebra pair of semisimple complex algebras.  Then $B \subseteq A$ has
depth $2$ if and only if for every maximal ideal $I$ in $A$, one has $A(I \cap B) = (I \cap B)A$.  
\end{theorem}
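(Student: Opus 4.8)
The plan is to prove only the forward implication — that depth $2$ forces $A(I\cap B) = (I\cap B)A$ for every maximal ideal $I$ — in exactly the spirit of the preceding proposition, by applying an additive functor to a direct-summand relation and then reading off an annihilator; the reverse implication I would simply attribute to \cite{BKK}. Throughout I fix a maximal ideal $I$ of $A$ and abbreviate $J := I \cap B$, a two-sided ideal of $B$. Since $A$ is semisimple, the depth $2$ hypothesis supplies $A \o_B A \| A^q$ as $A$-$A$-bimodules for some $q$ (this two-sided even depth $2$ relation coincides with the one-sided left and right versions here by Theorem~\ref{th-rightleft}, a semisimple subalgebra pair being a split separable Frobenius extension), and I shall restrict it to whichever one-sided bimodule structure is convenient.

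First I would establish $AJ \subseteq JA$. Restricting the summand relation to $A$-$B$-bimodules and applying the additive functor $(A/I) \o_A -$ gives $(A/I)\o_B A \| (A/I)^q$ as right $B$-modules, the direct analogue of the isomorphism $k_\eps \o_B A \cong A/B^+A$ used in the proposition. The right annihilator in $A$ of the right module $A/I$ is exactly $I$, so $J \subseteq I$ annihilates $(A/I)^q$ on the right, and therefore annihilates its direct summand $(A/I)\o_B A$ on the right.

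The main obstacle will be that, unlike the one-dimensional case $A/I \cong k$ of the proposition, once $A/I$ is a genuine matrix algebra the module $(A/I)\o_B A$ is no longer the cyclic quotient $A/JA$, so $AJ\subseteq JA$ cannot be read off directly. I resolve this by producing $A/JA$ as a summand. The composite $B \into A \to A/I$ is a $B$-$B$-bimodule map with kernel $J$, hence induces an injection of right $B$-modules $B/J \into A/I$; because $B$ is semisimple this injection splits as right $B$-modules, and applying the exact functor $-\o_B A$ yields a split injection of right $A$-modules $A/JA \cong (B/J)\o_B A \into (A/I)\o_B A$. Thus $A/JA$ is a right-$A$-module direct summand of $(A/I)\o_B A$, so $J$ annihilates $A/JA$ on the right. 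Since right multiplication by $j \in J$ sends the class of $a$ to the class of $aj$ in $A/JA$, this annihilation is precisely the assertion $AJ \subseteq JA$.

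Finally, the opposite inclusion $JA \subseteq AJ$ follows by the mirror-image argument: I would restrict the summand relation to $B$-$A$-bimodules, apply $- \o_A (A/I)$, and use the split injection $B/J \into A/I$ of left $B$-modules to realize $A/AJ \cong A \o_B (B/J)$ as a left-$A$-module summand of $A \o_B (A/I) \| (A/I)^q$, whence left multiplication by $J$ annihilates $(A/I)^q$ and so annihilates $A/AJ$, giving $JA \subseteq AJ$. Combining the two inclusions yields $A(I\cap B) = (I\cap B)A$, as required.
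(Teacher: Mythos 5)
Your forward-direction argument is correct, and it matches the paper's treatment in scope: the paper likewise proves only ($\Rightarrow$) and cites \cite{BKK} for ($\Leftarrow$). The mechanism is also essentially the paper's --- apply a functor built from the maximal ideal $I$ to a one-sided depth $2$ summand relation, embed $A/(I\cap B)A$ into $(A/I)\o_B A$, and transfer annihilation by $I\cap B$ along the embedding --- but you justify the key embedding differently. The paper realizes $A/I$ concretely as the complementary ideal $J$ with unit $1_J$ and $\eps$-twisted right $A$-action, and proves injectivity of $a + B^+A \mapsto 1_J \o a$ by an explicit dual-bases computation that uses only projectivity of ${}_BA$; you instead split the monomorphism $B/(I\cap B) \into A/I$ of one-sided $B$-modules using semisimplicity of $B$ and tensor up to exhibit $A/(I\cap B)A$ as a direct summand. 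Your version is cleaner where it applies, but it consumes semisimplicity of $B$, whereas the paper's dual-bases argument needs only ${}_BA$ projective --- which is why essentially the same computation also proves the preceding proposition for arbitrary augmented algebras. Note too that splitness is more than you need: annihilation already passes to submodules, so the bare injectivity (which is what the paper establishes) suffices.

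The one genuine error is your opening claim that depth $2$ supplies $A \o_B A \| A^q$ as $A$-$A$-bimodules. That two-sided relation is H-separability, which is strictly stronger than depth $2$: already for $B = \C \subseteq A = \C \oplus \C$ (a pair of depth $\leq 2$, trivially normal in the sense of the theorem) the bimodule $A \o_B A$ contains the off-diagonal simple $A$-$A$-bimodule constituents $e_i A \o e_j A$ with $i \neq j$, none of which occur in $A^q$. Theorem~\ref{th-rightleft} does not rescue this: it equates the left ($B$-$A$-bimodule) and right ($A$-$B$-bimodule) even depth conditions for QF extensions, and says nothing about an $A$-$A$-bimodule version. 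Fortunately your proof never actually uses the two-sided claim --- your two steps use exactly the $A$-$B$- and $B$-$A$-bimodule relations, which is precisely what depth $2$ together with Theorem~\ref{th-rightleft} (the pair being a split separable Frobenius, hence QF, extension) legitimately provides --- so the repair is merely to replace that parenthetical by this statement.
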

\begin{proof}
($\Leftarrow$) See \cite[Section 4]{BKK}.  ($\Rightarrow$)  Given maximal ideal $I$ in $A$, there
is an ideal $J$ with identity element $1_J$ such that $A = I \oplus J$, and algebra homomorphism $\eps: A \to A/I \cong J$.  Denote $I = A^+$, $I \cap B = B^+$, and note that the $A$-module
$J_A = J_{\eps}$.  Given the right depth $2$ condition ${}_AA \o_B A_B \| A^q$, tensor from
the left by $J_A$ obtaining $J \o_B A_B \| J^q$.

Note the $B$-module homomorphism $A/ B^+A \to J \o_B A$ given by $a + B^+A \mapsto 1_J \o_B a$ 
(well-defined since $1_J \cdot B^+ = 0$) which we claim is monic.  For suppose that $1_J \o a = 0$ for $a$ in
the projective module ${}_BA$, so $a = \sum_i f_i(a) e_i$ in some free module $B^n$.
Then $$ 0= 1_J \o a = \sum_i \eps(f_i(a)) \o e_i \Rightarrow \eps(f_i(a)) = 0
\Rightarrow f_i(a)  \in B^+, \forall i = 1,\ldots,n$$
hence $a = \sum_i f_i(a) e_i \in B^+A$ so $a + B^+A = 0$ which proves the claim.  

Since $B^+$ annihilates $J_B^q$, it annihilates $J \o_B A_B$ and therefore $A/B^+A$ via
the monomorphism.  Thus $AB^+ \subseteq B^+A$.  The opposite inclusion follows from
a similar argument applied to the left depth $2$ condition.  
\end{proof}
\section{Ideal depth of a ring homomorphism} 
 
Let $S$ and $R$ be unital associative rings and $S \to R$ a ring homomorphism where $1_S \mapsto 1_R$.  Suppose ${}_RI_R$ is a bimodule.   With no further ado, we will restrict $I$ to bimodules ${}_SI_R$, ${}_RI_S$ or ${}_SI_S$ via the homomorphism
$S \to R$.  Note that the kernel of $S \to R$ is contained in the annihilator ideal in $S$ of the left (or right) $S$-module
$I$ denoted by $\ann {}_SI $. 

We let $C_0^I(S \to R) = S$, and for $n \geq 1$, 
$$ C_n^I(S \to R) = I \o_S \cdots \o_S I \ \ \ \mbox{\rm ($n$ times $I$)} $$
For $n \geq 1$, the $C_n^I(S \to R)$ has a natural $R$-$R$-bimodule (briefly $R$-bimodule) structure which restricts
to $S$-$R$-, $R$-$S$- and $S$-bimodule structures occuring in the next definition.

\begin{definition}
\label{def-depth}
The ring homomorphism $S \to R$ has $I$-depth $2n+1 \geq 1$ if as $S$-bimodules $ C_n^I(S \to R) \h C_{n+1}^I(S \to R) $.  The ring homomorphism $S \to R$ has left (right) $I$-depth $2n \geq 2$ if $ C_n^I(S \to R) \h C_{n+1}^I(S \to R) $ as $S$-$R$-bimodules (respectively, $R$-$S$-bimodules).  
\end{definition}

It is clear that if $S \to R$ has either $I$-depth $2n$, it has $I$-depth $2n+1$ by restricting the h-equivalence condition to $S$-bimodules.  If it has $I$-depth $2n+1$, it has $I$-depth $2n+2$ by
tensoring the h-equivalence by $- \o_S I$ or $I \o_S -$. The \textit{minimum $I$-depth} is denoted
by $d^I(S \to R)$.  

Note that the minimum left and right minimum \textit{even} $I$-depths may differ by $2$ (in which case
$d^I(S \to R)$ is the least of the two).
In the next section we provide a general condition, which includes a Hopf subalgebra pair  $S \subseteq R$ of symmetric Frobenius algebras with $I$ an ideal in $R$,  where the left and right minimum even $I$-depths coincide.

We also remark that once $S \to R$ has $I$-depth $2n+1$ the $C_{n+m}^I(S \to R)$'s stop growing as $m \to \infty$ in terms of adding new indecomposables 
in a category of modules with unique factorization, since $C_n^I(S \to R) \h C_{n+m}^I(S \to R)$
for all $m \geq 0$ (see the example in the previous section).   This corresponds well with the classical notion of finite depth
in subfactor theory.   

\begin{lemma}
Let $S \to R$ have kernel $K$, $\overline{S} := S/K$ and $\overline{S} \into R$ be the induced ring monomorphism.
Then the left or right minimum depth $d^I(S \to R) = d^I(\overline{S} \into R)$ unless $d^I(\overline{S} \into R) = 1$, in which case equality holds if the quotient homomorphism 
$p: S \to \overline{S}$ has a section. 
\end{lemma}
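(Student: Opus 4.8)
The plan is to exploit the single structural fact, already noted above, that $K := \ker(S \to R)$ lies in the annihilator $\ann {}_S I$ and likewise annihilates $I$ on the right. First I would record the consequence that for every $n \geq 1$ the two-sided $S$-action on $C_n^I(S \to R) = I \o_S \cdots \o_S I$ factors through $\overline{S} = S/K$; since the identifications defining $\o_S$ and $\o_{\overline{S}}$ on $I$ are then literally the same relations, one obtains a canonical equality $C_n^I(S \to R) = C_n^I(\overline{S} \into R)$ of $R$-bimodules for all $n \geq 1$, and a fortiori of the restricted $S$-$R$-, $R$-$S$- and $S$-bimodules. This is the only computation, and it is immediate.

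The second step is a full-subcategory observation. The $\overline{S}$-bimodules are exactly the $S$-bimodules annihilated by $K$ on both sides, and between two such modules an $S$-bilinear map is the same datum as an $\overline{S}$-bilinear map; the same holds one-sidedly for $\overline{S}$-$R$- and $R$-$\overline{S}$-bimodules. Hence every split monomorphism and split epimorphism witnessing an h-equivalence $C_n^I \h C_{n+1}^I$ with $n \geq 1$ is simultaneously an $S$- and an $\overline{S}$-(bi)module map, so each such h-equivalence holds over $S$ if and only if it holds over $\overline{S}$. Reading off Definition~\ref{def-depth}, this shows that for every $d \geq 2$ the map $S \to R$ has (left, right, or two-sided) $I$-depth $d$ precisely when $\overline{S} \into R$ does. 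In particular the minimum depths agree as soon as $d^I(\overline{S} \into R) \geq 2$, with no further hypothesis.

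The crux is the boundary index $n = 0$, the sole place where the two towers differ: $C_0^I(S \to R) = S$ carries the regular, hence nontrivial, $K$-action, whereas $C_0^I(\overline{S} \into R) = \overline{S}$ is annihilated by $K$. I would first dispose of the easy direction. Depth $1$ for $S \to R$ means $S \h I$ as $S$-bimodules, and a split epimorphism $I^r \to S$ carries the zero $K$-action on $I^r$ onto its image, forcing $k \cdot 1_S = 0$, i.e. $k = 0$, for every $k \in K$; hence $K = 0$. Thus when $K \neq 0$ the map $S \to R$ cannot have depth $1$, which is exactly the exceptional clause: if $d^I(\overline{S} \into R) = 1$ then $\overline{S} \into R$, and hence $S \to R$ by the second step, already has depth $2$, so $d^I(S \to R) \geq 2$ while $d^I(\overline{S} \into R) = 1$. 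To recover equality in this regime one invokes the section: a ring splitting $\sigma$ of $p : S \to \overline{S}$ realizes $\overline{S}$ as a retract of $S$, and is the device used to transport the depth-one h-equivalence $\overline{S} \h I$ back across $p$ to the level of $S$; in the degenerate case $K = 0$ one simply has $S = \overline{S}$ and equality is trivial.

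I expect this last transport to be the main obstacle. The difficulty is precisely reconciling the nontrivial $K$-action on the boundary object $C_0 = S$ with the trivial $K$-action on $I$: the second step handles every higher term uniformly, but at $n = 0$ the regular bimodule $S$ is not pulled back from $\overline{S}$, so only the section provides enough structure to compare it with $I$. Verifying that the section indeed supplies the split maps needed over $S$ — equivalently, that in its presence the boundary term no longer obstructs the depth-one equivalence — is the delicate point on which the statement turns, and is exactly where the hypothesis on $p$ is consumed; the case $K = 0$ furnishes the sanity check at the opposite extreme.
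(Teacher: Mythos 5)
Your first two steps are correct and essentially coincide with the paper's treatment of all depths $\geq 2$: since $K$ annihilates $I$ on both sides, one has the canonical identification $C_n^I(S \to R) = C_n^I(\overline{S} \into R)$ for $n \geq 1$, and $S$-(bi)linear maps between $K$-annihilated modules are the same data as $\overline{S}$-(bi)linear maps, so every h-equivalence $C_n^I \h C_{n+1}^I$ with $n \geq 1$ transfers in both directions (the paper phrases the forward direction via $M_R \| N_R^q \Rightarrow \ann N_R \subseteq \ann M_R$ and the converse as ``pullback along $p$,'' but the content is the same). The genuine gap is that you never prove the exceptional clause, which is the only nontrivial assertion remaining after step two and the only place the section hypothesis is consumed: you describe the ``transport'' of the depth-one h-equivalence $\overline{S} \h I$ back across $p$ as ``the main obstacle'' and ``the delicate point'' and stop there. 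The paper actually carries this out by making depth one element-theoretic: $S \to R$ has $I$-depth $1$ iff there exist $w_j, z_i \in I^S$ and $f_j, g_i \in \Hom({}_SI_S, {}_SS_S)$ with $x = \sum_i z_i g_i(x)$ for all $x \in I$ and $\sum_j f_j(w_j) = 1_S$; composing with $p$ gives the forward direction, and for the converse each $\tilde{f}_j, \tilde{g}_i \in \Hom({}_{\overline{S}}I_{\overline{S}}, {}_{\overline{S}}\overline{S}_{\overline{S}})$ is lifted to $\Hom({}_SI_S, {}_SS_S)$ by composing with the section $\sigma$. A complete proof must produce exactly these lifted maps and verify the two displayed identities over $S$; gesturing at the section does not.

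Moreover, as written your sketch is internally inconsistent at precisely this point. Your annihilator observation is correct: a split epimorphism $I^r \to S$ of $S$-bimodules forces $k = k \cdot 1_S = 0$ for every $k \in K$, so $I$-depth $1$ for $S \to R$ forces $K = 0$. But then, in the regime where you propose to invoke the section ($d^I(\overline{S} \into R) = 1$ while $d^I(S \to R) \geq 2$, i.e.\ $K \neq 0$), equality can never be ``recovered'' by any section --- your own step rules it out, since no $S$-bimodule retraction $I^r \to S$ can exist when $K \neq 0$. So the transport you defer is not merely delicate; it is impossible in exactly the case you reserve it for, and the only consistent termination of your outline is the degenerate case $K = 0$, where $S = \overline{S}$ and no section is needed. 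Concretely, the tension surfaces in the lifted equation $\sum_j \sigma(\tilde{f}_j(w_j)) = \sigma(1_{\overline{S}})$, which must be compared with $1_S$: this is the verification the paper's lifting step performs and your proposal omits, and it is where the hypothesis on $\sigma$ (what kind of section it is, and what it does to the unit) does its work. Until you either execute that lift or reconcile it with your $K = 0$ observation, the crux of the lemma remains unproved.
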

\begin{proof}
Note that if $M_R \| N_R^q$, then $\ann N_R \subseteq \ann M_R$.  Since $K$ is in $\ann C_n^I(R,S)$
for all $n \geq 1$ and $C_n^I(R,S) \cong C_n^I(R,\overline{S})$ as $\overline{S}$-modules,  it follows that $C_n^I(R,S) \h C_{n+1}^I(R,S)$ implies $C_n^I(R,\overline{S}) \h
C_{n+1}^I(R,\overline{S})$ for the bimodules at issue.  The converse is easy by pullback along $p$. 

$S \to R$ has $I$-depth $1$ iff there are central elements $w_j, z_i \in I^S$ and mappings $f_j, g_i \in \Hom ({}_SI_S, {}_SS_S)$ such that $x = \sum_i z_i g_i(x)$ for all $x \in I$ and $\sum_j f_j(w_j) = 1_S$.   By composing with the quotient homomorphism $S \to \overline{S}$, we obtain $\tilde{f_j}, \tilde{g}_i \in \Hom ({}_{\overline{S}}I_{\overline{S}}, {}_{\overline{S}}\overline{S}_{\overline{S}})$ and $z_i \in I^{\overline{S}}$ such that
$x = \sum_i \tilde{g}_i(x) z_i$ and $\sum_j \tilde{f_j}(w_j) = 1_{\overline{S}}$.  The converse may be proven with the extra hypothesis in the lemma, since all mappings in $\Hom ({}_{\overline{S}}I_{\overline{S}}, {}_{\overline{S}}\overline{S}_{\overline{S}})$ have a lifting
to $\Hom ({}_SI_S, {}_SS_S)$ along $p$ via a section $\sigma: \overline{S} \into S$
satisfying $p \circ \sigma = \id_{\overline{S}}$. 
\end{proof}
\begin{example}
\begin{rm}
Suppose $S$ is a subring of $R$ (where $1_S = 1_R$).  Let $S \to R$ be the inclusion monomorphism
and $I = R$, the natural $R$-bimodule.  The minimum depth of the subring $S \subseteq R$ as defined in \cite[Boltje-Danz-K\"ulshammer]{BDK} is denoted by $d(S,R)$. We note that $d(S,R) = d^R(S \into R)$.  
In fact, $C_n^R(S \to R) = R \o_S \cdots \o_S R := C_n(R,S)$ ($n$ times $R$) for $n > 0$, and the depth $2n+1$ condition in \cite{BDK} is that 
\begin{equation}
\label{eq: d2n+1}
C_{n+1}(R,S) \| C_n(R,S)^q
\end{equation}
 as $S$-bimodules (some $q \in \Z_+$).  The left depth $2n$ condition in \cite{BDK}
is  (\ref{eq: d2n+1}) more strongly as natural $S$-$R$-bimodules (and as $R$-$S$-bimodules
for the right depth $2n$ condition). 
 But (using a pair of classical face and degeneracy maps of homological algebra) we always
have $C_n(R,S) \| C_{n+1}(R,S)$ as $R$-$S$-, $S$-$R$- or $S$-bimodules, so that the depth $2n$
as well as $2n+1$ conditions coincide in the case of subring with the $I$-depth $2n$ and $2n+1$ conditions above where $I = R$. (Note though that $R$-depth $1$ is slightly stronger than subring
depth $1$ since $R$ is not just centrally projective over $S$ (i.e., $R  \| S^q$ as $S$-bimodules)
but also $R$ is a split extension of $S$ as $S$-bimodules since $S \| R^q$ implies $S \| R$;
the split extension condition is satisfied by all group algebra extensions and subfactor examples of finite depth.)  
\end{rm}
\end{example}

\begin{example}
\begin{rm}
Let $S \subseteq R$ be a subring pair of semisimple complex algebras.
Then the minimum depth $d(S,R)$ may be computed from the inclusion matrix $M$, alternatively an $n$ by $m$ induction-restriction table of $n$ $S$-simples induced to non-negative integer linear combination of $m$ $R$-simples along rows,
and by Frobenius reciprocity, columns show restriction of $R$-simples in terms of $S$-simples). The procedure to obtain $d(S,R)$ given in the paper \cite{BKK} is the following:  let $M^{[2n]} = (MM^t)^n$ and $M^{[2n+1]} = M^{[2n]} M$ (and $M^{[0]} = I_n$), then the matrix $M$ has depth $n \geq 1$
if  for some $q \in \Z_+$ 
\begin{equation}
\label{eq: dn inequality}
M^{[n+1]} \leq qM^{[n-1]}
\end{equation}
   The minimum depth of $M$ is equal to $d(S,R)$ by \cite[appendix]{BDK} (or Theorem~\ref{th-towerd=subrd} below combined with \cite{BuK,BKK}).  

 In terms of the bipartite graph of the inclusion $S \subseteq R$, $d(S,R)$ is the lesser of the minimum
odd depth and the minimum even depth \cite{BKK}.  The matrix $M$ is an incidence matrix
of this bipartite graph if all entries greater than $1$ are changed to $1$, while zero entries are retained as $0$: let the $S$-simples
be represented by $n$ white dots in a bottow row of the graph, and $R$-simples by $m$ black dots in a top row, connected by edges joining black and white dots (or not) according to the $0$-$1$-matrix entries obtained from $M$.  The minimum odd depth of the bipartite graph is $1$ plus the diameter in edges of
the row of white dots (indeed an odd number), while the minimum even depth is $2$ plus the largest of the diameters of the bottom row where a subset of black dots under one white dot is identified together.    

Now suppose $I$ is an ideal in $R$.  Let the primitive central idempotents of $R$ be given by
$e_1,\ldots,e_m$ and those of $S$ by $f_1,\ldots,f_n$.   
Then $I$ is itself a semisimple complex algebra with unit $e = e_1 + \cdots + e_r$ (assumed with no  loss
of generality).  Now suppose $f_i e_j = 0$ for $i > s$ and all $j \leq r$, while $f_i e_j \neq 0$
for $i \leq s$ and some $j \leq r$.  Let $J = f_1 S \oplus \cdots \oplus f_s S$, a semisimple subalgebra of $S$: this ideal satisfies $J \oplus \ann ({}_SI) = S$.  Then
it is not hard to see that  $I$-depth of $S \subseteq R$ is computed as the depth of the subring pair
of semisimple algebras $J \into I$ via $s \mapsto es$: 
\begin{equation}
\label{eq: dee}
d^I(S,R) = d(J, I), 
\end{equation}
the minimum depth of the $s \times r$ submatrix $M_1$ in the upper lefthand corner of $M$.  
This follows from the lemma where $S/K = J$ and the realization that $I \o_J \cdots \o_J I$ is induction and restriction $n$ times of $I$-simples as explained in the appendix of \cite{BDK}.  
\end{rm}
\end{example}
\begin{example}
\begin{rm}
As a sub-example of the previous example, let $R = \C S_4$, the complex group algebra of the
permutation group on four letters, and $S = \C S_3$.  The inclusion diagram pictured below with the degrees of the irreducible representations, is determined
from the character tables of $S_3$ and $S_4$ or the branching rule (for the Young diagrams labelled by the partitions of $n$ and representing the irreducibles of $S_n$).  
\[
\begin{xy}
\xymatrix{
\overset{\displaystyle 1}{\circ} \ar@{-}[d]& \overset{\displaystyle 3}{\circ} \ar@{-}[ld] \ar@{-}[rd] & \overset{\displaystyle 2}{\circ} \ar@{-}[d]&
\overset{\displaystyle 3}{\circ} \ar@{-}[ld] \ar@{-}[rd]&\overset{\displaystyle 1}{\circ} \ar@{-}[d]\\
 \mathop{\bullet}\limits_{\displaystyle 1} & &\mathop{\bullet}\limits_{\displaystyle 2} & & \mathop{\bullet}\limits_{\displaystyle 1}}
\end{xy}
\]
This graph has minimum odd depth 5 and minimum even depth 6, whence $d(S,R) = 5$. Alternatively, the inclusion matrix $M$ is given by 
$$M= \left( \begin{array}{ccccc}
  1 & 1 & 0 & 0 & 0 \\
0 & 1 & 1 & 1 & 0 \\
0 & 0 & 0 & 1 & 1 
\end{array} \right)
$$
whose bracketed powers defined above satisfy a depth 5 inequality~(\ref{eq: dn inequality}). 

Now let $I$ be the ideal in $R$ associated with the two-dimensional representation, the white
dot labelled 2.  Then $d(J,I)$ is the depth of the matrix $(1)$, so $d^I(S,R) = 1$.  If $I$ is
the ideal of $R$ associated with the first three white dots in the diagram above, then $J$ is the ideal in $S$
associated to the first two black dots, and $d(J,I)$ is the minimum depth of the (upper-left hand corner) matrix
$$M' = \left( \begin{array}{ccc}
 1 & 1 & 0  \\
 0 & 1 & 1  
\end{array} \right)
$$
which has minimum depth 3.  If $I$ is the ideal associated to the three white dots labelled 3,2, and 3,
we similarly compute $d^I(S,R) = 4$.  Finally, if $I$ is ideal associated to the first four white dots
in the diagram above,  the $d^I(S,R) = 5$.   
\end{rm}
\end{example}
\begin{prop}
Suppose $S \subseteq R$ is a subring pair of semisimple complex algebras and $I \subseteq R$ is
an ideal.  Then $d^I(S,R) \leq d(S,R)$.
\end{prop}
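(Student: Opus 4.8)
The plan is to reduce the inequality to a comparison between the ambient tower $C_n(R,S)$ and the ``$I$-subtower'' $C_n^I := C_n^I(S \into R)$, exploiting that the latter is a direct summand of the former. Writing $I = eR$ for the central idempotent $e = e_1 + \cdots + e_r$ of $R$, one has $R = I \oplus (1-e)R$ as $R$-bimodules, so tensoring shows that each $C_n^I = I \o_S \cdots \o_S I$ is the image of the idempotent $R$-bimodule endomorphism $\pi_n$ of $C_n(R,S) = R \o_S \cdots \o_S R$ that multiplies every tensor factor by $e$ (well defined since $e$ is central and $es = se$ for $s \in S$). Hence $C_n^I \| C_n(R,S)$ as $S$-bimodules for every $n$, so $\simples(C_n^I) \subseteq \simples(C_n(R,S))$ as $S$-bimodules. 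Equivalently, via (\ref{eq: dee}) one may replace the pair by $J \into I$ and compare the bracketed powers of the upper-left block $M_1$ of the inclusion matrix, which has the block form $M = \left( \begin{array}{cc} M_1 & M_2 \\ 0 & M_3 \end{array}\right)$ because $f_i e_j = 0$ for $i > s$, $j \le r$.

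First I would record the entrywise domination $[M^{[\ell]}]_{11} \ge M_1^{[\ell]} \ge 0$ for all $\ell$, proved by a routine nonnegativity argument: the upper-left $s \times s$ (resp.\ $s \times r$) block of $(MM^t)^k$ (resp.\ $(MM^t)^k M$) counts all walks of the given length, which dominates those walks confined to the first $s$ white and first $r$ black indices, and the latter is $M_1^{[\ell]}$. Translating back, this redelivers $\simples(C_n^I) \subseteq \simples(C_n(R,S))$ and, using the face/degeneracy splittings internal to $I$ (available because $e$ is a two-sided identity for $I$), that both sequences $\simples(C_n^I)$ and $\simples(C_n(R,S))$ are \emph{increasing} in $n$. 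Since $d(S,R) = 2n_0+1$ (the odd case; the even case is handled with the finer $S$-$R$- and $R$-$S$-bimodule versions of Definition~\ref{def-depth}), the ambient sequence stabilizes at $n_0$, and the goal becomes to show the $I$-sequence has stabilized by $n_0$ as well.

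Here lies the main obstacle. Knowing only that $\simples(C_n^I)$ is an increasing family of \emph{subsets} of the stabilized set $\simples(C_{n_0}(R,S))$ does \emph{not} by itself force stabilization by degree $n_0$: the summand can in principle keep acquiring new constituents --- all of which already lie in the ambient stabilized set --- at degrees beyond $n_0$. Thus the crux is the reverse containment $\simples(C_{n+1}^I) \subseteq \simples(C_n^I)$ for $n \ge n_0$, equivalently that every new walk of length $\ell+1$ between $I$-indices can be shortened to one of length $\ell-1$ \emph{within} the $I$-subgraph whenever this is possible in the full graph. I would try to extract this by transporting the ambient h-equivalence $C_{n_0+1}(R,S) \h C_{n_0}(R,S)$ through the idempotents $\pi_\bullet$, reconciling the splitting maps realizing $C_{n_0+1}(R,S) \| C_{n_0}(R,S)^q$ with the projections $\pi_{n_0}, \pi_{n_0+1}$ and then applying Lemma~\ref{lem-h}. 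The delicate point --- the step I expect to be hardest --- is precisely that these splittings need not commute with $\pi_\bullet$, so one must either choose $I$-adapted splittings or invoke additional structure of the inclusion (beyond the bare summand relation) to prevent the $I$-subtower from stabilizing strictly later than the ambient tower.
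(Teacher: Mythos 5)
Your reduction via (\ref{eq: dee}) to the pair $J \into I$ and the block form $M = \left(\begin{array}{cc} M_1 & M_2 \\ 0 & M_3 \end{array}\right)$ is exactly where the paper starts, but your proposal then halts at the one step carrying all the content: you never prove that $\simples(C^I_{n+1}) \subseteq \simples(C^I_n)$ once the ambient tower has stabilized, and you only gesture at ``$I$-adapted splittings'' that might reconcile the ambient h-equivalence with your projections $\pi_\bullet$. That is a genuine gap, so as it stands this is not a proof. For comparison, the paper's own proof dispatches exactly this point in one sentence: $d(J,I)$ is the minimum depth of a \emph{subgraph} of the bipartite inclusion graph of $S \subseteq R$, and since the minimum odd and even depths are read off from diameters of a row of the graph, ``it is clear that $d(J,I) \leq d(S,R)$.''

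You should know, however, that the obstacle you isolated is real and not a failure of your technique: a shortest ambient walk between two $J$-simples may route through $R$-simples outside $I$, so distances inside the subgraph can strictly exceed ambient distances, and neither your summand argument nor the paper's diameter remark closes this. Concretely, realize $S = \C^3 \into R = M_2(\C) \times M_2(\C) \times M_2(\C)$ by $(a,b,c) \mapsto (\mathrm{diag}(a,b),\, \mathrm{diag}(b,c),\, \mathrm{diag}(a,c))$, so the inclusion graph is a hexagon with inclusion matrix
$$ M = \left(\begin{array}{ccc} 1 & 0 & 1 \\ 1 & 1 & 0 \\ 0 & 1 & 1 \end{array}\right), $$
and let $I$ be the ideal of the first two $R$-simples, so that $J = S$ and $M_1$ is the first two columns of $M$. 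Then $MM^t$ is strictly positive, so the condition (\ref{eq: dn inequality}) holds at $n=3$ and fails at $n=1,2$, giving $d(S,R) = 3$; but $(M_1M_1^t)_{13} = 0$ while $\left((M_1M_1^t)^2\right)_{13} = 1$ --- the length-four walk $f_1, e_1, f_2, e_2, f_3$ cannot be shortened inside $I$ because the only shortcut passes through $e_3 \notin I$ --- so $I$-depth $3$ (and below) fails directly from Definition~\ref{def-depth}, and $d^I(S,R) = d(J,I) = 4 > 3 = d(S,R)$. Hence the reverse containment you flagged as the hardest step is false in general, no splittings commuting with $\pi_\bullet$ can exist in this example, and the Proposition itself fails at this level of generality: a subgraph of a bipartite graph can have strictly larger diameters than the ambient graph, which is precisely the unjustified ``it is clear'' in the paper's argument. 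Your honest impasse thus located a genuine error, not a missing trick.
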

\begin{proof}
This follows from the observation above that $d^I(S,R) = d(J,I)$  where $J \subseteq S$ and $I \subseteq R$ are both subring
pairs of semisimple algebras. But $d(J,I)$ is the depth of a subgraph of the inclusion
graph of $S \subseteq R$. By the description of depth of a bipartite graph as the minimum of the odd and even depths in terms of diameter of the row of black dots, it is clear that $d(J,I) \leq d(S,R)$ . 
\end{proof}

\section{Even depth of QF extensions}
A proper ring extension is taken to be a monomorphism $S \into R$; stretching this terminology slightly,
a ring homomorphism $S \to R$ is referred to as a \textit{ring extension}, denoted by 
$R \| S$.  
A ring extension $R \| S$ is a \textit{left QF extension} if the induced module ${}_SR$ is finitely generated projective
and the natural bimodules satisfy ${}_RR_S \| {{}_R\Hom ({}_SR,{}_SS)_S}^q$ for some positive integer $q$.  A right QF extension is oppositely defined.  A \textit{QF extension} $R \| S$ is both a left and right QF  extension and may be characterized by both $R_S$ and ${}_SR$ being finite projective,
and two h-equivalences of bimodules given by ${}_RR_S \h {}_R\Hom ({}_SR,{}_SS)_S$
and ${}_SR_R \h {}_S\Hom (R_S,S_S)_R$ \cite{BM1,BM2}.  For example, a Frobenius extension $S \to R$
is a QF extension since it is left and right finite projective and satisfies the stronger
conditions that $R$ is \textit{isomorphic} to its right $S$-dual $R^*$ and its left $S$-dual ${}^*R$ as natural 
$S$-$R$-bimodules, respectively $R$-$S$-bimodules.

\subsection{$\beta$-Frobenius extensions vs.\ QF extensions} In Hopf algebras and quantum algebras, examples of Frobenius extensions often occur with a twist foreseen by Nakayama and Tzuzuku, their so-called
beta-Frobenius extension.  Let $\beta$ be an automorphism of the ring $S$ and $S \subseteq R$
a subring pair.  We next denote the pullback module of a module ${}_SM$ along $\beta: S \to S$
by ${}_{\beta}M$. A proper ring extension $R \| S$ is a \textit{$\beta$-Frobenius extension}
if $R_S$ is finite projective and there is a bimodule isomorphism ${}_SR_R \cong {}_{\beta}\Hom (R_S, S_S)$.  One shows that $R \| S$ is a Frobenius extension if and only if $\beta$ is an inner automorphism. A subring pair of Frobenius algebras $S \subseteq R$ is $\beta$-Frobenius extension
so long as $R_S$ is finite projective and the Nakayama automorphism $\eta_R$ of $R$ stabilizes
$S$, in which case $\beta = \eta_S \circ \eta_R^{-1}$ \cite{P}.  For instance a 
finite dimensional Hopf algebra $R=H$ and $S=K$ a Hopf subalgebra of $H$ are a pair of Frobenius algebras satisfying the conditions just given: the formula for $\beta$ reduces to the following given in terms of the modular functions of $H$
and $K$ and the antipode $S$ \cite[7.8]{K}: 
\begin{equation}
\label{eq: modularfcns}
\beta(x) = \sum_{(x)} m_H(x\1) m_K(S(x\2)) x\3
\end{equation}


When a $\beta$-Frobenius extension is a QF extension is addressed in the next proposition.

\begin{prop}
\label{prop-QFbeta}
A $\beta$-Frobenius extension $R \| S$ is a left QF extension if and only if there are $u_i, v_i \in R$
$(i = 1,\ldots,n$) such that $su_i = u_i \beta(s)$ and $v_i s = \beta(s) v_i$ for all $i, s \in S$,
and 
\begin{equation}
\label{eq: beta-inv}
\beta^{-1}(s) = \sum_{i=1}^n u_i s v_i.
\end{equation}  
\end{prop}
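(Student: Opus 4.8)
The plan is to reduce the left QF condition to a single summand relation between $R$-$S$-bimodules, then to identify the left $S$-dual ${}^*R$ explicitly as a twist of the regular bimodule, and finally to read off the elements $u_i,v_i$ from the splitting maps. First I would dispose of the finite projectivity clause: since $R_S$ is finite projective, its right $S$-dual $R^* = \Hom(R_S,S_S)$ is finite projective as a left $S$-module, and pulling the left action back along the automorphism $\beta$ preserves this; restricting the defining isomorphism ${}_SR_R \cong {}_\beta R^*$ to left $S$-modules then shows ${}_SR$ is \emph{automatically} finite projective. Hence $R \| S$ is a left QF extension if and only if ${}_RR_S \| ({}^*R)^q$ for some $q$, where ${}^*R = \Hom({}_SR,{}_SS)$ is the left $S$-dual regarded as an $R$-$S$-bimodule.

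The key step is to identify ${}^*R$ as a twist of $R$ itself. Writing $E \in \Hom(R_S,S_S)$ for the Frobenius homomorphism of the $\beta$-Frobenius structure (so $E(sr)=\beta(s)E(r)$ and $E(rs)=E(r)s$), with dual bases $x_i,y_i \in R$ satisfying $r=\sum_i x_iE(y_ir)=\sum_i E(rx_i)y_i$, I would define $\Phi\colon R \to {}^*R$ by $\Phi(r)(x)=\beta^{-1}(E(xr))$. A direct check shows that $\Phi(r)$ is genuinely left $S$-linear, that $\Phi$ is left $R$-linear, and that $\Phi(r\beta(s))=\Phi(r)\cdot s$ for $s\in S$; injectivity and surjectivity follow from the dual basis identities. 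Thus $\Phi$ is a bimodule isomorphism ${}_RR_\beta \rarr{\cong} {}^*R$, where ${}_RR_\beta$ is $R$ with right action pulled back along $\beta$, i.e.\ $r\cdot_\beta s = r\beta(s)$. Getting this twist correct — in particular the placement of $\beta$ versus $\beta^{-1}$ — is the part I expect to require the most care, since everything downstream is bookkeeping. With the isomorphism in hand, the left QF condition becomes ${}_RR_S \| ({}_RR_\beta)^q$.

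Finally I would translate this summand relation into the stated elements using the characterization of $\|$ in (\ref{eq: oplus}): it holds precisely when there exist $R$-$S$-bimodule maps $g_i\colon {}_RR_S \to {}_RR_\beta$ and $f_i\colon {}_RR_\beta \to {}_RR_S$ with $\sum_i f_i\circ g_i = \id_R$. Since every left $R$-linear endomorphism of $R$ is right multiplication by its value at $1$, each $g_i$ is $r\mapsto ru_i$ and each $f_i$ is $r\mapsto rv_i$; the right $S$-linearity of $g_i$ forces $su_i=u_i\beta(s)$ and that of $f_i$ forces $v_is=\beta(s)v_i$, which are exactly the intertwining relations in the statement. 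The composite condition $\sum_i f_i\circ g_i=\id_R$ then reads $\sum_i u_iv_i=1_R$. To recover the displayed identity I would observe that, granted $su_i=u_i\beta(s)$, one has $\beta^{-1}(s)u_i=u_is$, whence $\beta^{-1}(s)=\beta^{-1}(s)\sum_i u_iv_i=\sum_i u_is v_i$; conversely, setting $s=1_S$ in (\ref{eq: beta-inv}) returns $\sum_i u_iv_i=1_R$. Thus the existence of the splitting maps is equivalent to the existence of $u_i,v_i$ satisfying the two intertwining relations together with (\ref{eq: beta-inv}), and running the construction in reverse supplies the converse implication.
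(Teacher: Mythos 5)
Your proof is correct, and all the delicate points check out: the isomorphism $\Phi\colon {}_RR_\beta \to {}^*R$, $\Phi(r)(x) = \beta^{-1}(E(xr))$, does have the twist in the right place (well-definedness uses $E(sxr)=\beta(s)E(xr)$, the right $S$-equivariance uses $E(xr\beta(s))=E(xr)\beta(s)$, and bijectivity follows from the two dual-basis identities $\sum_j x_jE(y_jr)=r=\sum_j\beta^{-1}(E(rx_j))y_j$), and your reduction of (\ref{eq: beta-inv}) to $\sum_i u_iv_i=1_R$ in the presence of the intertwining relations is exactly what is needed. Your route differs from the paper's in organization rather than substance, but the difference is real. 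The paper treats the two implications asymmetrically: for the converse it works on the $S$-$R$ side, rewriting the $\beta$-Frobenius condition as ${}_{\beta^{-1}}R_R \cong {}_S(R^*)_R$ and extracting $u_i = f_i(1_R)$, $v_i = g_i(1_R)$ from split right $R$-module maps --- the mirror image of your computation --- while in the forward direction it never makes a twisted isomorphism explicit, instead constructing the splitting $R \| ({}^*R)^n$ directly from the given elements: the maps $E_i = E(u_i-) \in \Hom({}_SR_S,{}_SS_S)$ play the role of images of $1_R$, and the retractions $\psi_i(g) = \sum_j x_j\, g(v_iy_j)$ are built from the Casimir elements $\sum_j x_j \o_S v_iy_j \in (R\o_S R)^R$. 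Your single lemma ${}^*R \cong {}_RR_\beta$, proved once and used in both directions, makes each implication pure bimodule bookkeeping; in particular it spares you the paper's dualization step (applying $\Hom(-_S,S_S)$ and invoking $({}^*R)^*\cong R$ and ${}^*(R^*)\cong R$) in the converse, at the modest cost of the dual-basis verification that $\Phi$ is bijective, which the paper's explicit Casimir construction sidesteps in the forward direction. Your preliminary observation that ${}_SR$ is automatically finite projective agrees with the paper's parenthetical remark to the same effect.
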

\begin{proof}
Suppose $R \| S$ is $\beta$-Frobenius extension.  Then the bimodule isomorphism given above
applied to $1_R$ has value $E: R \to S$, a cyclic generator of ${}_{\beta}\Hom (R_S, S_S)_R$
satisfying $E(s_1r s_2) = \beta(s_1)E(r)s_2$ for all $s_1,s_2 \in S, r \in R$.  If $x_1,\ldots,x_m \in R$
and $\phi_1,\ldots,\phi_m \in \Hom (R_S,S_S)$ are projective bases of $R_S$, and $E(y_j - ) = \phi_j$
the equations $\sum_{j=1}^m x_j E(y_jr) = r$ and $\sum_{j=1}^m \beta^{-1}(E(r x_j))y_j = r$
hold for all $r \in R$.  (Call $(E,x_j,y_j)$ a \textit{$\beta$-Frobenius coordinate system} of $R \| S$.
Note that also ${}_SR$ is finite projective.)

Given the elements $u_i, v_i \in R$ satisfying the equations above, let $E_i = E(u_i -)$ which defines $n$ mappings in (the untwisted) $\Hom ({}_SR_S, {}_SS_S)$.  Also define $n$ mappings
$\psi_i \in \Hom({}_R({}^*R)_S, {}_RR_S)$ by $\psi_i(g)  = \sum_{j=1}^m x_j g(v_i y_j)$
where it is not hard to show using the $\beta$-Frobenius coordinate equations that $\sum_j x_j \otimes_S v_i y_j \in (R \o_S R)^R$ for each $i$ (a Casimir element). 
It follows that $\sum_{i=1}^n \psi_i(E_i) = 1_R$ and that $R \| {}^*R^n$ as natural $R$-$S$-bimodules, whence $R$ is a left QF extension of $S$.

Conversely, assume the left QF condition ${}_S{R^*}_R \| R^n$, equivalent to ${}_RR_S \| {}^*R^n$ by applying the right $S$-dual functor and noting $({}^*R)^* \cong R$ as well ${}^*(R^*) \cong R$.   Also assume the slightly rewritten $\beta$-Frobenius
condition ${}_{{\beta}^{-1}}R_R \cong {}_S(R^*)_R$, which then implies ${}_{{\beta}^{-1}}R_R \| R^n$. So there are $n$ mappings $g_i \in \Hom ({}_{{\beta}^{-1}}R_R, {}_SR_R)$ and $n$ mappings
$f_i \in \Hom ({}_SR_R, {}_{{\beta}^{-1}}R_R)$ such that $\sum_{i=1}^n f_i \circ g_i = \id_R$.
Equivalently, with $u_i := f(1_R)$ and $v_i := g(1_R)$, $\sum_{i=1}^n u_iv_i = 1_R$, and the equations in the proposition are satisfied.      
\end{proof}
The following corollary weakens one of the equivalent conditions in \cite{F}.  It implies
that a finite dimensional Hopf algebra that is QF over a Hopf subalgebra is necessarily
Frobenius over it; nontrivial examples of QF extensions occur for weak Hopf algebras
over their separable base algebra \cite{IK}. 
\begin{cor}
Let $H$ be a finite dimensional Hopf algebra and $K$ a Hopf subalgebra.  In the notation of
(\ref{eq: modularfcns}) the following are equivalent:
\begin{enumerate}
\item The automorphism, $\beta = \id_K$.
\item The algebra extension $H \| K$ is a QF extension.
\item The modular functions $m_H(x) = m_K(x)$ for all $x \in K$.
\end{enumerate}
\end{cor}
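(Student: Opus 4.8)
The plan is to route all three implications through a single structural observation: the twist $\beta$ of (\ref{eq: modularfcns}), restricted to $K$, is a \emph{left winding automorphism}. First I would apply coassociativity to (\ref{eq: modularfcns}) to rewrite, for $x \in K$,
\[
\beta(x) = \sum_{(x)} \gamma(x\1)\, x\2, \qquad \gamma := (m_H|_K) * (m_K \circ S),
\]
where $*$ denotes convolution in $K^*$. Here $m_H|_K$ is a character of $K$ (the restriction of the algebra map $m_H$ to the subalgebra $K$), and $m_K \circ S$ is the convolution inverse $m_K^{-1}$ of the character $m_K$; hence $\gamma$ is again a character of $K$, i.e.\ a grouplike of $K^*$, and $\beta$ is the winding automorphism attached to it. The single fact I will reuse repeatedly is $\eps(\beta(s)) = \gamma(s)$ for $s \in K$, which is immediate from the displayed formula by applying $\eps$ to the second tensor factor.

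With this description the equivalence $(1) \Leftrightarrow (3)$ becomes bookkeeping in $K^*$: one has $\beta = \id_K$ iff $\gamma = \eps$, i.e.\ iff $(m_H|_K) * (m_K \circ S) = \eps$, and convolving on the right by $m_K$ (the inverse of $m_K \circ S$) shows this is equivalent to $m_H|_K = m_K$, which is condition $(3)$. The implication $(1) \Rightarrow (2)$ is then essentially free: if $\beta = \id_K$ it is in particular an inner automorphism, so by the criterion recalled just before Proposition~\ref{prop-QFbeta} the extension $H \| K$ is Frobenius, hence a QF extension.

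The substantive direction is $(2) \Rightarrow (1)$, and here the plan is to feed the QF hypothesis into Proposition~\ref{prop-QFbeta} and then collapse the resulting existential data with the counit. Assuming $H \| K$ is QF, the proposition yields $u_i, v_i \in H$ with $s u_i = u_i \beta(s)$ and $\beta^{-1}(s) = \sum_i u_i s v_i$ for all $s \in K$. Evaluating the last identity at $s = 1$ gives the normalization $\sum_i u_i v_i = 1_H$, and applying the counit $\eps = \eps_H$ produces $\sum_i \eps(u_i)\eps(v_i) = 1 \neq 0$; in particular $\eps(u_{i_0}) \neq 0$ for some index $i_0$. Applying $\eps$ to the intertwining relation $s u_{i_0} = u_{i_0}\beta(s)$ and using $\eps(\beta(s)) = \gamma(s)$ gives $\eps(s)\eps(u_{i_0}) = \eps(u_{i_0})\gamma(s)$ for all $s \in K$; dividing by the nonzero scalar $\eps(u_{i_0})$ forces $\gamma = \eps$, that is $\beta = \id_K$.

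I expect the only real obstacle to be this last direction, where one must convert the purely existential QF data of Proposition~\ref{prop-QFbeta} into the pointwise identity $\gamma = \eps$. The device that overcomes it is the observation that the normalization $\sum_i u_i v_i = 1$ survives the counit, guaranteeing a single coordinate $u_{i_0}$ on which $\eps$ does not vanish; the twist $\beta$ is then pinned down as trivial along that coordinate. Once the winding-automorphism identity $\eps \circ \beta = \gamma$ is in hand, everything else is formal manipulation in the character group $G(K^*)$.
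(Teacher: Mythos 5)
Your proposal is correct and takes essentially the paper's own route: both arguments hinge on applying the counit to the data supplied by Proposition~\ref{prop-QFbeta} to obtain $\eps \circ \beta = \eps$ (the paper hits $\beta^{-1}(s) = \sum_i u_i s v_i$ with $\eps$ using $\eps(\sum_i u_iv_i) = 1$, while you hit the intertwining relation $su_{i_0} = u_{i_0}\beta(s)$ at a single coordinate with $\eps(u_{i_0}) \neq 0$ --- a harmless variant), and then on uniqueness of convolution inverses in $\Hom(K,k)$ applied to (\ref{eq: modularfcns}). Your winding-automorphism packaging $\beta(x) = \sum_{(x)} \gamma(x\1)\, x\2$ with $\gamma = (m_H|_K) * (m_K \circ S)$ and your proof of $(1) \Leftrightarrow (3)$ directly rather than via the paper's cycle $1 \Rightarrow 2 \Rightarrow 3 \Rightarrow 1$ are a clean reorganization of the same computation, not a different method.
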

\begin{proof}
($1 \Rightarrow 2$) A Frobenius extension is a QF extension. ($2 \Rightarrow 3$) Applying
the counit $\eps$ to (\ref{eq: beta-inv}), one obtains $\eps \circ \beta = \eps$, since
$\eps(\sum_i u_i v_i ) = 1$.  Applied to (\ref{eq: modularfcns}) uniqueness of inverse in convolution algebra $\Hom (K, k)$ shows that $m_H = m_K$ on $K$.  ($3 \Rightarrow 1$)  This follows from (\ref{eq: modularfcns}).
\end{proof}

It is well-known that for a Frobenius extension $R  \| S$, coinduction of a module $M_S$
(to the right $R$-module $\Hom (R_S, M_S)$) is naturally isomorphic to induction of $M_S$
(to the right $R$-module $M \o_S R$).  Similarly, a QF extension has h-equivalent
coinduction and induction functors, which is seen from the naturality of the mappings in the next proof.
  
\begin{prop}
\label{prop-co}
Suppose ${}_AM_S$ is a bimodule and $R \| S$ is a QF extension. Then there is an 
h-equivalence of bimodules, 
\begin{equation}
\label{eq: functorial}
{}_AM \otimes_S R_R \h {}_A\Hom (R_S, M_S)_R.
\end{equation}
\end{prop}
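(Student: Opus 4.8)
The plan is to route the coinduced bimodule ${}_A\Hom(R_S,M_S)_R$ through the right dual $R^* := \Hom(R_S,S_S)$, and then transport the defining QF $h$-equivalence ${}_SR_R \h {}_S {R^*}_R$ across the induction functor $M \o_S -$. Thus the argument splits into one genuine isomorphism, supplied by finite projectivity of $R_S$, followed by a purely formal step in which $R^*$ is traded for $R$ up to $h$-equivalence.

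First I would establish, for the fixed bimodule ${}_AM_S$, the evaluation map
$$\nu_M : {}_AM \o_S {R^*}_R \longrightarrow {}_A\Hom(R_S,M_S)_R, \qquad \nu_M(m \o \phi)(r) = m\,\phi(r).$$
Since $R \| S$ is QF, the module $R_S$ is finite projective, so a dual basis $x_j \in R$, $\phi_j \in R^*$ with $r = \sum_j x_j \phi_j(r)$ exists, and $f \mapsto \sum_j f(x_j) \o \phi_j$ provides a two-sided inverse to $\nu_M$; the dual-basis identity is exactly what forces both composites to be the identity. A direct check shows $\nu_M$ is left $A$-linear and right $R$-linear, the right $R$-action on either side being precomposition of the argument with left multiplication by $r$, so that $\nu_M$ is an isomorphism of $A$-$R$-bimodules. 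Crucially $\nu_M$ is natural in $M$, which upgrades the fixed-$M$ statement to an isomorphism of the functors $M \mapsto M \o_S R^*$ and $M \mapsto \Hom(R_S,M_S)$.

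Second, I would feed the QF $h$-equivalence ${}_SR_R \h {}_S {R^*}_R$ into the substitution rule (\ref{eq: lem'}) (itself a consequence of Lemma~\ref{lem-h}) together with the trivial $h$-equivalence ${}_AM_S \h {}_AM_S$ on the left. This yields ${}_AM \o_S R_R \h {}_AM \o_S {R^*}_R$; because the witnessing maps are $\id_M \o g_i$ and $\id_M \o f_i$ for fixed $S$-$R$-bimodule maps $f_i,g_i$ independent of $M$, they too are natural in $M$. Composing this $h$-equivalence with the isomorphism $\nu_M$ and using transitivity gives ${}_AM \o_S R_R \h {}_A\Hom(R_S,M_S)_R$, as required; since all the maps involved are natural in $M$ simultaneously, this in fact exhibits an $h$-equivalence of the induction and coinduction functors in the sense defined above.

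The main obstacle, and really the only nonformal content, is the first step: confirming that $\nu_M$ is simultaneously compatible with the left $A$-action inherited from $M$, the right $R$-action that twists the argument by $r \mapsto rx$, and the balancing over $S$ in the tensor product, and that the dual basis makes it bijective. Once $\nu_M$ is in hand, the QF hypothesis does all the remaining work through the formal substitution principle, so no structure of $R \| S$ beyond finite projectivity of $R_S$ and the single $h$-equivalence ${}_SR_R \h {}_S {R^*}_R$ is needed.
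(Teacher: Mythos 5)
Your proposal is correct and follows the paper's own argument exactly: the same evaluation isomorphism ${}_AM \o_S \Hom(R_S,S_S) \cong {}_A\Hom(R_S,M_S)_R$ established via a dual basis for the finite projective module $R_S$, combined with the defining QF $h$-equivalence ${}_SR_R \h {}_S\Hom(R_S,S_S)_R$ and the substitution principle of Lemma~\ref{lem-h}. Your additional remarks on naturality in $M$ match the paper's observation, stated just before the proposition, that the maps are natural and hence give an $h$-equivalence of the induction and coinduction functors.
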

\begin{proof}
 Since $R_S$ is f.g.\ projective, it follows that there is an $A$-$R$-bimodule isomorphism 
\begin{equation}
 M \otimes_S \Hom (R_S, S_S) \cong \Hom (R_S, M_S) ,
\end{equation}
given by $m \o_S \phi \mapsto m\phi(-)$ with inverse constructed from projective bases for $R_S$. 
But the right $S$-dual of $R$ is h-equivalent to ${}_SR_R$, so  ~(\ref{eq: functorial}) holds by Lemma~\ref{lem-h}. 
\end{proof}
The next theorem notes that minimum even depth of a QF extension is the same in its right and left
versions given in Definition~\ref{def-depth} (where $I = R$, $C_n(R,S) = R \o_S \cdots \o_SR$, $n$ times $R$).   
\begin{theorem}
\label{th-rightleft}
 If $R \| S$ is QF extension, then $R \| S$ has left depth $2n$ if and only if $R \| S$ has right depth $2n$.
\end{theorem}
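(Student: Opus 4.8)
The plan is to exploit the left--right symmetry built into the QF property by applying a duality functor that carries $S$-$R$-bimodules to $R$-$S$-bimodules while preserving h-equivalence. Concretely, I would fix the contravariant additive functor $F(X) = \Hom({}_SX, {}_SS)$, sending an $S$-$R$-bimodule $X$ to the $R$-$S$-bimodule $\Hom({}_SX, {}_SS)$ (left $R$ coming contravariantly from the right $R$-action on $X$, right $S$ from the right action on $S$). Since $F$ is additive it turns split epimorphisms into split monomorphisms and conversely, so that $M \h N$ implies $F(M) \h F(N)$, the evident contravariant analogue of Lemma~\ref{lem-h}. The left depth $2n$ hypothesis is ${}_SC_n(R,S)_R \h {}_SC_{n+1}(R,S)_R$; applying $F$ gives $F(C_n(R,S)) \h F(C_{n+1}(R,S))$ as $R$-$S$-bimodules, so everything reduces to identifying $F(C_m(R,S))$ with the natural bimodule ${}_RC_m(R,S)_S$ up to h-equivalence.

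Accordingly, the heart of the argument is the claim that for a QF extension $F(C_m(R,S)) = \Hom({}_SC_m(R,S), {}_SS) \h {}_RC_m(R,S)_S$ for every $m \geq 1$. For $m=1$ this is precisely one of the defining QF h-equivalences, ${}_R{}^*R_S \h {}_RR_S$. For general $m$ I would peel off the tensor factors of $C_m(R,S) = R \o_S \cdots \o_S R$ one at a time from the left by means of the tensor--Hom adjunction, which is legitimate because the QF hypothesis forces ${}_SR$ to be finite projective; this also supplies the natural isomorphism $\Hom({}_SR, {}_SM) \cong {}^*R \o_S M$ (the left-handed analogue of the isomorphism used in Proposition~\ref{prop-co}) needed to move each dualized factor past the rest. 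Iterating yields a bimodule isomorphism $F(C_m(R,S)) \cong {}^*R \o_S \cdots \o_S {}^*R$ ($m$ factors, in reversed order, which is immaterial). Finally I substitute ${}^*R \h R$ factor by factor by means of the QF h-equivalence together with the substitution principle (\ref{eq: lem'}), obtaining $F(C_m(R,S)) \h R \o_S \cdots \o_S R = {}_RC_m(R,S)_S$ as required.

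Combining the claim for $m=n$ and $m=n+1$ with the application of $F$ to the hypothesis produces ${}_RC_n(R,S)_S \h {}_RC_{n+1}(R,S)_S$, which is exactly right depth $2n$. The reverse implication is obtained symmetrically, using instead the right $S$-dual $\Hom(-_S, S_S)$, the finite projectivity of $R_S$, and the companion QF h-equivalence ${}_SR_R \h {}_SR^*{}_R$; alternatively one passes to the opposite rings $R^{\mathrm{op}} \| S^{\mathrm{op}}$, which again form a QF extension. The step I expect to be the main obstacle is the claim, and within it the careful bookkeeping of all four one-sided module structures through the iterated adjunction: one must check that the dualization and the factor-by-factor substitution respect the outer left $R$- and right $S$-actions simultaneously, so that the resulting h-equivalence is genuinely one of $R$-$S$-bimodules and not merely of one-sided modules. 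The finite projectivity of $R$ on both sides, guaranteed by the QF hypothesis, is exactly what licenses commuting $\Hom$ past the tensor factors and is indispensable here.
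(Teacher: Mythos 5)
Your proposal is correct. It differs from the paper's proof in its choice of duality and in the key identification, though both arguments belong to the same family: dualize the left depth $2n$ h-equivalence to flip $S$-$R$-bimodules into $R$-$S$-bimodules, then use the QF h-equivalences between $R$ and its $S$-duals to recognize the result. The paper applies the contravariant functor $\Hom(-_R, R_R)$, which lowers the index by one via $\Hom(C_n(R,S)_R, R_R) \cong \Hom(C_{n-1}(R,S)_S, R_S)$, and then identifies $C_{m+1}(R,S) \h \Hom(C_m(R,S)_S, R_S)$ by iterating Proposition~\ref{prop-co} (induction $\h$ coinduction, resting on ${}_SR_R \h {}_S\Hom(R_S,S_S)_R$) through the chain $C_{n+1}(R,S) \h \Hom(C_p(R,S)_S, C_{n-p+1}(R,S)_S)$; comparing the two h-equivalences at $p=n$ gives right depth $2n$. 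You instead apply the $S$-valued left dual $F = \Hom({}_S-, {}_SS)$, which preserves the index, and prove the clean closed-form identification $F(C_m(R,S)) \cong {}^*R \o_S \cdots \o_S {}^*R$ by iterated tensor--Hom adjunction (legitimate since ${}_SR$ is finite projective), followed by factor-wise substitution ${}^*R \h R$ via ${}_RR_S \h {}_R\Hom({}_SR,{}_SS)_S$ and the substitution principle~(\ref{eq: lem'}); your bookkeeping of the four one-sided actions checks out, e.g.\ for $m=2$ the left $R$-action on $F(C_2)$, coming from the right $R$-action on the last tensor factor, matches the left $R$-action on the first ${}^*R$ factor under the adjunction isomorphism, and the contravariant analogue of Lemma~\ref{lem-h} that you invoke is immediate since $F$ carries the split-surjectivity data $\sum_i f_i \circ g_i = \id$ to $\sum_i F(g_i)\circ F(f_i) = \id$. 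What each route buys: yours is more self-contained and transparent, since the dual of a tensor power is computed once and for all as a tensor power of ${}^*R$, whereas the paper's route, by passing through coinduction, produces the intermediate h-equivalences $C_{n+1}(R,S) \h \Hom(C_p(R,S)_S, C_{n-p+1}(R,S)_S)$, which carry independent information (for instance, at $n=p=1$, that left depth $2$ is equivalent to $\End R_S \h R$ as $R$-$S$-bimodules) and reuse a proposition of interest elsewhere in the paper.
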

\begin{proof}
The left depth $2n$ condition on $S \to R$ recall is $C_{n+1}(R,S) \h C_n(R,S)$ as $S$-$R$-bimodules.
To this  apply the additive functor $\Hom(-_R,R_R)$ (into the category of  $R$-$S$-bimodules), 
noting that $\Hom (C_n(R,S)_R, R_R) \cong \Hom (C_{n-1}(R,S)_S, R_S)$ via $f \mapsto f(-\o_S \cdots  - \o_S 1_R)$ for each integer $n \geq 1$. It follows (from Lemma~\ref{lem-h}) that  
there is an $R$-$S$-bimodule h-equivalence, 
  \begin{equation}
\label{eq: *}
\Hom (C_n(R,S)_S, R_S) \h \Hom(C_{n-1}(R,S)_S, R_S) 
\end{equation}
(Then in the depth two case, the left depth two condition is equivalent to $\End R_S \h R$ as natural 
$R$-$S$-bimodules.)

Given bimodule ${}_RM_S$,  we have ${}_RM \otimes_S R_R \h {}_R\Hom (R_S, M_S)_R$ by the previous lemma: apply this to $ C_{n+1}(R,S) = C_n(R,S) \o_S R$ using the hom-tensor adjoint relation:  there are h-equivalences and isomorphisms of 
$R$-bimodules, 
\begin{eqnarray}
\label{eq: **}
C_{n+1}(R,S)  & \h & \Hom (R_S, C_n(R,S)_S) \\
    & \h & \Hom (R_S, \Hom (R_S, C_{n-1}(R,S)_S)_S) \nonumber \\
 & \cong & \Hom (R \o_S R_S, C_{n-1}(R,S)_S) \nonumber\\
\cdots & \h &  \Hom(C_p(R,S)_S, C_{n-p+1}(R,S)_S)  \nonumber 
\end{eqnarray}
for each $p = 1,2,\ldots, n$ and $n = 1,2,\ldots$.  
Compare ~(\ref{eq: *}) and~(\ref{eq: **}) with $p = n$ to get  ${}_RC_{n+1}(R,S)_S \h {}_R C_n(R,S)_S$ which is the   right depth $2n$ condition.

The converse is proven similarly from the symmetric conditions of the QF hypothesis.  
\end{proof}

The next proposition is an easy corollary of the proofs of Theorem~\ref{th-rightleft} and of  Proposition~\ref{prop-co}, therefore omitted.  An $R$-bimodule $I$ is said to be \textit{QF relative to a subring $S \subseteq R$} below
if $I_S$ and ${}_SI$ are f.g.\ projectives, ${}_S I _R \h {}_S\Hom (I_S, S_S)_R$ and ${}_R I _S \h {}_R\Hom ({}_SI, {}_SS)_S$.  
We also suppose below an $R$-bimodule $I$ is a ring with multiplication that is associative
in all respects with the bimodule structure, such as $(x_1 \cdot r) x_2 = x_1 (r \cdot x_2)$ for all $x_1, x_2 \in I, r \in R$. 
 For example, an ideal $I$ in a  semisimple complex algebra $R$ with semisimple subalgebra $S$ satisfies this hypothesis.  

\begin{cor}
Suppose $I$ is a multiplicative $R$-bimodule with unit $e$ and is QF relative to a subring
$S \subseteq R$.    Then $S \subseteq R$ has left $I$-depth $2n$ if and only if $S \subseteq R$ has
right $I$-depth $2n$.  
\end{cor}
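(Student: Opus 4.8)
The plan is to imitate, with $I$ in place of $R$, the proofs of Proposition~\ref{prop-co} and Theorem~\ref{th-rightleft}; the two standing hypotheses have been arranged precisely so that every step survives the substitution. Throughout write $C_n := C_n^I(S \to R) = I \o_S \cdots \o_S I$ ($n$ times), an $R$-bimodule for $n \geq 1$, and note that since $I_S$ and ${}_SI$ are finite projective, each $C_n$ is finite projective on either side over $S$.

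First I would record the $I$-analogue of Proposition~\ref{prop-co}: for any bimodule ${}_AM_S$ there is an h-equivalence ${}_AM \o_S I_R \h {}_A\Hom(I_S, M_S)_R$. Indeed, since $I_S$ is finite projective one has the natural $A$-$R$-bimodule isomorphism $M \o_S \Hom(I_S, S_S) \cong \Hom(I_S, M_S)$, and the relative QF condition ${}_SI_R \h {}_S\Hom(I_S, S_S)_R$ then gives the claim by Lemma~\ref{lem-h}. Iterating this together with the hom-tensor adjunction exactly as in~(\ref{eq: **}) produces the \emph{unconditional} tower of $R$-bimodule h-equivalences
\begin{equation}
\label{eq: Itower}
C_{n+1} \h \Hom((C_p)_S, (C_{n-p+1})_S), \qquad p = 1, \ldots, n,
\end{equation}
which uses the relative QF data alone; in particular $C_{n+1} \h \Hom((C_n)_S, I_S)$ and, replacing $n$ by $n-1$, $C_n \h \Hom((C_{n-1})_S, I_S)$. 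Consequently the right $I$-depth $2n$ condition $C_{n+1} \h C_n$ as $R$-$S$-bimodules is equivalent to the single h-equivalence
\begin{equation}
\label{eq: Istar}
\Hom((C_n)_S, I_S) \h \Hom((C_{n-1})_S, I_S)
\end{equation}
of $R$-$S$-bimodules.

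Next I would extract~(\ref{eq: Istar}) from the left $I$-depth $2n$ hypothesis $C_{n+1} \h C_n$ as $S$-$R$-bimodules. Applying the additive functor $\Hom(-_R, I_R)$ into $R$-$S$-bimodules and invoking Lemma~\ref{lem-h} yields $\Hom((C_{n+1})_R, I_R) \h \Hom((C_n)_R, I_R)$, so it remains to compare $\Hom((C_m)_R, I_R)$ with $\Hom((C_{m-1})_S, I_S)$ for each $m \geq 1$, exactly as the isomorphism following~(\ref{eq: *}) did in the subring case. Here the multiplicative structure enters: writing $C_m = C_{m-1} \o_S I$ with the right $R$-action carried by the last tensorand, the assignment $f \mapsto f(- \o_S \cdots \o_S e)$, with the unit $e$ of $I$ playing the role of $1_R$, and its partner $g \mapsto [\, x_1 \o \cdots \o x_m \mapsto g(x_1 \o \cdots \o x_{m-1})\, x_m \,]$, in which the trailing factor is absorbed by the ring multiplication of $I$, furnish the comparison maps; the compatibility $(x_1 \cdot r)x_2 = x_1(r \cdot x_2)$ is exactly what makes the second map well defined over $S$ and right $R$-linear.

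The step I expect to be the main obstacle is precisely this comparison $\Hom((C_m)_R, I_R) \h \Hom((C_{m-1})_S, I_S)$. For the subring case $I = R$ of Theorem~\ref{th-rightleft} it is an honest isomorphism, because $\End(R_R) \cong R$; for a general multiplicative $I$ the two maps above compose to the identity in one order only, exhibiting $I$ as a retract $I \| \End(I_R)$ via $h \mapsto h(e)$, and one must promote this retract to an h-equivalence. This is where both hypotheses are indispensable: the unit $e$ supplies the retraction, while the relative QF self-duality of $I$, together with the finite projectivity of $I_S$, upgrades the retract to the h-equivalence needed; in the motivating case of an ideal $I = eR$ in a semisimple algebra the retraction is already an isomorphism and the argument collapses to that of Theorem~\ref{th-rightleft}. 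Combining the resulting~(\ref{eq: Istar}) with the unconditional tower~(\ref{eq: Itower}) then gives $C_{n+1} \h C_n$ as $R$-$S$-bimodules, i.e.\ right $I$-depth $2n$. The reverse implication follows symmetrically, now applying $\Hom({}_R-, {}_RI)$ and the companion relative QF condition ${}_RI_S \h {}_R\Hom({}_SI, {}_SS)_S$.
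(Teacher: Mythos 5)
Your outline does follow the route the paper intends (the proof is omitted there precisely because Proposition~\ref{prop-co} and the chain~(\ref{eq: **}) are meant to survive the substitution $R \mapsto I$, $1_R \mapsto e$), and your $I$-analogue of Proposition~\ref{prop-co}, the resulting tower of $R$-bimodule h-equivalences $C_{n+1} \h \Hom ((C_p)_S, (C_{n-p+1})_S)$, and the reduction of right $I$-depth $2n$ to $\Hom ((C_n)_S, I_S) \h \Hom ((C_{n-1})_S, I_S)$ are all correct. But the step you flag as ``the main obstacle'' is not closed by flagging it: you never prove that the split monomorphism $\Hom ((C_{m-1})_S, I_S) \| \Hom ((C_m)_R, I_R)$ (at $m=1$ this reads $I \| \End (I_R)$, via $x \mapsto x\cdot_I(-)$ split by $h \mapsto h(e)$) is an h-equivalence; the sentence asserting that the relative QF data ``upgrades the retract'' \emph{is} the missing lemma, stated without argument. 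And one-sided summand relations cannot carry your chain: from $X' \| X$, $Y' \| Y$ and $X \h Y$ nothing follows relating $X'$ to $Y'$, so the left-depth consequence $\Hom ((C_{n+1})_R, I_R) \h \Hom ((C_n)_R, I_R)$ cannot be descended to the needed equivalence. The retract is in general proper --- e.g.\ for $S = R = k$ and $I = M_2(k)$, a multiplicative bimodule with unit that is trivially QF relative to $k$, one has $\End (I_k) \cong I \o_k I^*$ of dimension $16$ against $\dim I = 4$, and only the triviality of the $k$-$k$-bimodule structure rescues h-equivalence there --- so any upgrade must use the QF-relative axioms in an essential way that your text does not supply, and no such mechanism is evident.

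What actually makes the corollary ``easy'' in the paper's sense is the following observation, which repairs your argument. A multiplicative $R$-bimodule with unit is the same as an $R$-ring: the map $\kappa: R \to I$, $\kappa(r) = e \cdot r$, is a unital ring homomorphism (note $(e\cdot r)\cdot_I e = e \cdot_I (r \cdot e)$ forces $e \cdot r = r \cdot e$), and the bimodule actions are recovered as $r \cdot x = \kappa(r)\cdot_I x$, $x \cdot r = x \cdot_I \kappa(r)$. In the motivating case of an ideal $I = eR$ in a semisimple algebra, $\kappa$ is surjective; and whenever $I = e\cdot R$, every $x \in I$ equals $e \cdot r$ for some $r$, so $c \o_S x = (c \o_S e)\cdot r$ and right $R$-linearity gives $f(c \o x) = f(c\o e)\cdot r = f(c \o e)\cdot_I x$. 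Then your two comparison maps are mutually inverse, $\Hom ((C_m)_R, I_R) \cong \Hom ((C_{m-1})_S, I_S)$ exactly as in the isomorphism preceding~(\ref{eq: *}), and the whole proof of Theorem~\ref{th-rightleft} goes through verbatim with $e$ in place of $1_R$ --- this is evidently the omitted proof. So you should either prove ${}_R\End (I_R)_S \h {}_RI_S$ from the QF-relative hypotheses (which you have not done) or invoke such an ideal-type hypothesis on $I$; as written, the argument has a hole at its central step.
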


\section{Frobenius extensions}
\label{sec: FE}

As noted above a Frobenius extension $R \| S$ is characterized by any of the following four conditions
\cite{K}.
First, that $R_S$ is finite projective and ${}_SR_R \cong \Hom (R_S, S_S)$. Secondly,
that ${}_SR$ is finite projective and ${}_RR_S \cong \Hom ({}_SR, {}_SS)$.  Thirdly,
that coinduction and induction of right (or left) $S$-modules is naturally equivalent.  Fourth,
there is a Frobenius coordinate system $(E: R \to S; x_1,\ldots,x_m;y_1,\ldots,y_m)$, which satisfies
\begin{equation}
\label{eq: FEQ's}
E \in \Hom ({}_SR_S, {}_SS_S), \ \ \sum_{i=1}^m E(rx_i)y_i = r = \sum_{i=1}^m x_i E(y_i r) \ \ (\forall r \in R).
\end{equation}

\begin{lemma}
\label{lem-gen}
 The natural module $R_S$ is a generator iff ${}_SR$ is a generator
iff there are elements $\{ a_j \}_{j=1}^n $ and $\{ c_j \}^n_{j=1}$ such that $\sum_{j=1}^n E(a_jc_j) = 1_S$.  
\end{lemma}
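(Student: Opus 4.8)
The plan is to route all three assertions through the single equation $\sum_j E(a_j c_j) = 1_S$, proving that ``$R_S$ is a generator'' is equivalent to it and that ``${}_SR$ is a generator'' is equivalent to it. Since this equation lives in $S$ and treats the two sides of $R$ symmetrically, it will serve as the common hub that forces the left and right generator properties to agree. The only structural input I expect to need beyond the $S$-bilinearity of $E$ is the pair of Frobenius isomorphisms recorded at the start of this section: ${}_SR_R \cong \Hom(R_S,S_S)$ sending $a \mapsto E(a \cdot -)$, and ${}_RR_S \cong \Hom({}_SR,{}_SS)$ sending $a \mapsto E(- \cdot a)$. In terms of the coordinate system of (\ref{eq: FEQ's}) the inverse of the first sends $\phi$ to $\sum_i \phi(x_i) y_i$, so every right $S$-linear map $\phi: R \to S$ is of the form $E(a \cdot -)$; this explicit surjectivity is what I will exploit.

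For the passage from the equation to ``$R_S$ is a generator'', I would set $g_j := E(a_j \cdot -) \in \Hom(R_S,S_S)$ and let $h_j : S_S \to R_S$ be $h_j(s) = c_j s$. Right $S$-linearity of $E$ then gives $\sum_j (g_j \circ h_j)(s) = \bigl(\sum_j E(a_j c_j)\bigr) s = s$, so $\sum_j g_j \circ h_j = \id_{S_S}$, which exhibits $S_S \| R_S^{\,n}$ and hence makes $R_S$ a generator. Conversely, if $R_S$ is a generator then $S_S \| R_S^{\,n}$, producing $g_j \in \Hom(R_S,S_S)$ and $h_j \in \Hom(S_S,R_S)$ with $\sum_j g_j \circ h_j = \id_{S_S}$; writing $c_j := h_j(1_S)$ (so $h_j(s) = c_j s$) and, via the Frobenius isomorphism, $g_j = E(a_j \cdot -)$, evaluation at $1_S$ yields $1_S = \sum_j g_j(c_j) = \sum_j E(a_j c_j)$.

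The argument for ${}_SR$ is entirely symmetric: from the equation I would build $p_j := E(- \cdot c_j) \in \Hom({}_SR,{}_SS)$ and $q_j : {}_SS \to {}_SR$, $q_j(s) = s a_j$, and use left $S$-linearity of $E$ to obtain $\sum_j p_j \circ q_j = \id_{{}_SS}$, giving ${}_SS \| {}_SR^{\,n}$. In the reverse direction I would invoke the second Frobenius isomorphism ${}_RR_S \cong \Hom({}_SR,{}_SS)$ to represent the relevant functionals and recover the same equation. Combining the two biconditionals then proves the lemma.

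The one genuinely nontrivial step --- the place where being a \emph{Frobenius} extension is essential, rather than merely having an $S$-bimodule map $E$ --- is representing an arbitrary $S$-linear functional on $R$ as $E$ followed by a multiplication. This is exactly the surjectivity of the Frobenius isomorphisms, and it is what powers both of the ``generator $\Rightarrow$ equation'' directions; I expect it to be the main obstacle to a fully elementary proof. Everything else should be routine bookkeeping, relying only on the $S$-bilinearity of $E$ and the observation that a module homomorphism out of the rank-one free module $S$ is multiplication by the image of $1_S$.
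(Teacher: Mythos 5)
Your proof is correct and follows essentially the same route as the paper: both arguments hinge on the two Frobenius bimodule isomorphisms $r \mapsto E(r\,\mbox{--})$ and $r \mapsto E(\mbox{--}\,r)$ to represent arbitrary $S$-linear functionals, and both funnel everything through the single equation $\sum_j E(a_jc_j) = 1_S$. The only cosmetic difference is that you phrase the generator property via the split condition $S \| R^n$ while the paper uses the equivalent trace-ideal formulation $\sum_j \phi_j(c_j) = 1_S$ directly.
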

\begin{proof}
The bimodule isomorphism ${}_SR_R \stackrel{\cong}{\longrightarrow} {}_S\Hom (R_S, S_S)_R$
is realized by $r \mapsto E(r-)$ (with inverse $\phi \mapsto \sum_{i=1}^m \phi(x_i)y_i$).   If $R_S$
is a generator, then there are elements $\{ c_j \}_{j=1}^n$ of $R$ and mappings $\{ \phi_j \}_{j=1}^n$ of $R^*$ such that $\sum_{j=1}^n \phi_j(c_j) = 1_S$.  Let $Ea_j = \phi_j$.
Then $\sum_{j=1}^n E(a_jc_j) = 1_S$.  

Another bimodule isomorphism ${}_RR_S \stackrel{\cong}{\longrightarrow} {}_R\Hom ({}_SR, {}_SS)_S$ is realized by $r \mapsto E(-r) := rE$.  Then writing the last equation as
$\sum_j c_jE(a_j) = 1_S$ exhibits ${}_BA$ as a generator.
\end{proof} 

A Frobenius (or QF) extension $R \| S$ enjoys an \textit{endomorphism ring theorem} \cite{BM1, M67}, which states that $R \| \E := \End R_S$ is a Frobenius (respectively, QF) extension,  where the default ring homomorphism $R \to \E$ is understood to be
the left multiplication mapping $\lambda: r \mapsto \lambda_r$ where of course $\lambda_r(x) = rx$.   
It is worth noting that $\lambda$ is a left split $R$-monomorphism (by evaluation at $1_R$) so ${}_R\E$ is a generator.  

The \textit{tower} of a Frobenius (resp. QF) extension is obtained by iteration of the endomorphism ring and $\lambda$,
obtaining a tower of Frobenius (resp.\ QF) extensions where occasionally we need the notation $S := \E_{-1}, R = \E_0$ and $\E = \E_1$
\begin{equation}
\label{eq: tower}
S  \to R \into \E_1 \into \E_2 \into \cdots \into \E_n \into \cdots
\end{equation}
so $\E_2 = \End \E_R$, etc. By transitivity (\cite{P}, resp.\ \cite[M\"uller]{BM1}), all sub-extensions $\E_m \into \E_{m+n}$ in the tower are also Frobenius (resp.\ QF) extensions.

The rings $\E_n$ are h-equivalent to $C_{n+1}(R,S) = R \o_S \cdots \o_S R$ as $R$-bimodules in case $R \| S$ is
a QF extension.  This follows from noting the $$\End R_S \cong R \otimes_S \Hom (R_S,S_S) \h
R \o_S R$$ also holding as natural $\E$-$R$-bimodules, obtained by substitution of $R^* \h R$.
This observation is then iterated followed by cancellations of the type $R \o_R M \cong M$.  

\subsection{Tower above Frobenius extension} \label{subsect-tower} Specialize now to 
$R \| S$ a Frobenius extension with Frobenius coordinate system $E$ and $\{ x_i \}^m_{i=1}, \{ y_i \}_{i=1}^m$. Then the h-equivalences above are replaced by isomorphisms, and $\E_n \cong C_{n+1}(R,S)$ for each $n \geq -1$ as ring isomorphisms with respect to a certain induced ``$E$-multiplication.''  The $E$-multiplication on $R \o_S R$ is induced from the endomorphism ring $\End R_S \stackrel{\cong}{\longrightarrow} R \o_S R$ given by $f \mapsto \sum_i f(x_i) \o_S y_i$
with inverse $r \o r' \mapsto \lambda_r \circ E \circ \lambda_{r'}$.   The outcome is
$E$-multiplication on $C_2(R,S)$ given by 
\begin{equation}
\label{eq: rtwo}
(r_1  \o_S r_2)(r_3 \o_S r_4) = r_1 E(r_2 r_3) \o_S r_4
\end{equation}
with unity element $1_1 = \sum_{i=1}^m x_i \o_S y_i$.  Note that the $R$-bimodule structure
on $\E_1$ induced by $\lambda: R \into \E$ corresponds to the natural $R$-bimodule $R \o_S R$.  

 The $E$-multiplication is defined inductively on 
\begin{equation}
\label{eq: iterate}
\E_n \cong \E_{n-1} \o_{\E_{n-2}} \E_{n-1}
\end{equation}
 using the Frobenius homomorphism $E_{n-1}:
\E_{n-1} \to \E_{n-2}$ obtained by iterating the following construction:  a simple and natural Frobenius coordinate system on $\E_1 \cong R \o_S R$ is given
by $E_1(r \o_S r') = rr'$ and $\{ x_i \o_S 1_R \}_{i=1}^m$, $\{ 1_R \o_S y_i \}_{i=1}^m$  \cite{On}
as one checks. 

The iterative $E$-multiplication on $C_n(R,S)$ clearly exists as an associative algebra, but it seems worthwhile (and not available in the literature) to compute it explicitly.   The multiplication on $C_{2n}(R,S)$ is given by ($\o = \o_S, n \geq 1$) 
\begin{equation}
\label{eq: even}
(r_1 \o \cdots \o r_{2n})(t_1 \o \cdots \o t_{2n}) = 
\end{equation}
\begin{equation}
\nonumber
 r_1 \o \cdots \o r_nE(r_{n+1} E(\cdots E(r_{2n-1}E(r_{2n}t_1)t_2)\cdots )t_{n-1} )t_n) \o t_{n+1} \o \cdots \o t_{2n}.
\end{equation}
  The identity on $C_{2n}(R,S)$ is 
in terms of the dual bases,  
\begin{equation}
\label{eq: even1one}
1_{2n-1} = \sum_{i_1,\ldots,i_n = 1}^m x_{i_1} \o \cdots \o x_{i_n} \o y_{i_n} \o \cdots \o y_{i_1}.
\end{equation}

The multiplication on $C_{2n+1}(R,S)$ is given by 
\begin{equation}
\label{eq: odd}
(r_1 \o \cdots \o r_{2n+1})(t_1 \o \cdots \o t_{2n+1}) = 
\end{equation}
\begin{equation}
\nonumber
 r_1 \o \cdots \o r_{n+1}E(r_{n+2}E( \cdots E(r_{2n} E(r_{2n+1}t_1)t_2)\cdots )t_n) t_{n+1} \o \cdots \o t_{2n+1}
\end{equation}
with identity 
\begin{equation}
\label{eq: even1}
1_{2n} = \sum_{i_1,\ldots,i_n = 1}^m x_{i_1} \o \cdots \o x_{i_n}\o 1_R  \o y_{i_n} \o \cdots \o y_{i_1}.
\end{equation}  
Let the rings $C_n(R,S) : = R_n$ and distinguish them from the
isomorphic rings $\E_{n-1}$ ($n = 0, 1,\ldots$). 

The inclusions $R_n \into R_{n+1}$ are given by $r_{[n]} \mapsto r_{[n]} 1_n$, which works out in the odd and even cases to: 
$$R_{2n-1} \into R_{2n}, $$
\begin{equation}
\label{eq: oddincl}
 r_1 \o \cdots \o r_{2n-1} \longmapsto \sum_i r_1 \o \cdots \o r_n x_i \o y_i \o r_{n+1} \o \cdots \o r_{2n-1} 
\end{equation}
$$R_{2n} \into R_{2n+1},$$
\begin{equation}
\label{eq: evenincl}
  r_1 \o \cdots \o r_{2n} \longmapsto r_1 \o \cdots \o r_n \o 1_R \o r_{n+1} \o \cdots \o r_{2n} 
\end{equation}
Here the fact  that $\sum_i x_i \o y_i \in (R \o_S R)^R$ is used.  

 The bimodule structure on $R_n$ over a subalgebra $R_m$ (with $m < n$ via composition of left multiplication mappings $\lambda$) is just
given in terms of the multiplication in $R_m$ as follows:
\begin{equation}
\label{eq: bimod}
(r_1 \o \cdots \o r_m)(a_1 \o \cdots \o a_n) = 
\end{equation}
$$[(r_1 \o \cdots \o r_m)(a_1 \o \cdots \o a_m)] \o a_{m+1} \o \cdots \o a_n $$
with a similar formula for the right module structure.  

The formulas for the successive Frobenius homomorphisms $E_m: R_{m+1} \to R_m$ are given in even degrees by
 \begin{equation}
\label{eq: evenE}
E_{2n}(r_1 \o \cdots \o r_{2n+1}) = r_1 \o \cdots \o r_n E(r_{n+1}) \o r_{n+2} \o \cdots \o r_{2n+1}.
\end{equation}
for $n \geq 0$. The formulas in the odd case is
\begin{equation}
\label{eq: oddE}
E_{2n+1}(r_1 \o \cdots \o r_{2n+2}) =  r_1 \o \cdots \o r_{n} \o r_{n+1} r_{n+2} \o r_{n+3} \o \cdots \o r_{2n+2}
\end{equation}
for $n \geq 0$. 

The dual bases of $E_n$ denoted by  $x^n_i$ and $y^n_i$ are given by all-in-one formulas
\begin{eqnarray}
\label{eq: x}
x^{n}_i  & = & x_i \o 1_{n-1} \\ \label{eq: y}
y^{n}_i & = &   1_{n-1} \o  y_i
\end{eqnarray}
for $n \geq 0$ (where $1_0 = 1_R$). Note that $\sum_i x^n_i \o_{R_n} y_i^n = 1_{n+1}$.  

With another choice of Frobenius coordinate system $(F, z_j, w_j)$ for $R \| S$ there is in fact an invertible
element $d$ in the centralizer subring $R^S$ of $R$ such that $F = E(d-)$ and $\sum_i x_i \o_S y_i = \sum_j z_j \o_S d^{-1} w_j$ \cite{K,On}; whence an isomorphism of the
$E$-multiplication onto the $F$-multiplication, both on $R \o_S R$, given by $r_1 \o r_2 \mapsto
r_1 \o d^{-1} r_2$.  If the tower with $E$-multiplication is denoted by $R^E_n$ and the tower with
$F$-multiplication by $R^F_n$, there is a sequence of ring isomorphisms
$$ R_{2n}^E \stackrel{\cong}{\longrightarrow} R_{2n}^F, $$
\begin{equation}
\label{eq: evenI}
 r_1 \o \cdots \o r_{2n} \longmapsto  r_1 \o \cdots \o r_n \o d^{-1} r_{n+1} \o \cdots \o d^{-1} r_{2n}
\end{equation}
$$ R_{2n+1}^E \stackrel{\cong}{\longrightarrow}R_{2n+1}^F, $$
\begin{equation}
\label{eq: oddI}
 r_1 \o \cdots r_{2n+1} \longmapsto  r_1 \o \cdots \o r_{n+1} \o d^{-1} r_{n+2} \o \cdots \o d^{-1} r_{2n+1}
\end{equation}
which commute with the inclusions $R^{E,F}_n \into R^{E,F}_{n+1}$.   

\begin{theorem}
The multiplication, module and Frobenius structures for the tower $R_n  = R \o_S \cdots \o_S R$ ($n$ times $R$)
above a Frobenius extension $R \| S$ are given by the formulas (\ref{eq: rtwo}) to (\ref{eq: oddI}).
\end{theorem}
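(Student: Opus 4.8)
The plan is to establish all the displayed formulas (\ref{eq: rtwo})--(\ref{eq: oddI}) simultaneously by induction on $n$, with the engine of the induction being the iterative tower identification $\E_n \cong \E_{n-1} \o_{\E_{n-2}} \E_{n-1}$ of (\ref{eq: iterate}). The key sublemma is the following standard computation: for any Frobenius extension $T \| U$ with Frobenius coordinate system $(E_T, \{x_i\}, \{y_i\})$ as in (\ref{eq: FEQ's}), transporting the composition product on $\End T_U$ across the ring isomorphism $\End T_U \stackrel{\cong}{\longrightarrow} T \o_U T$, $f \mapsto \sum_i f(x_i) \o y_i$ (with inverse $r \o r' \mapsto \lambda_r \circ E_T \circ \lambda_{r'}$), yields precisely the product $(a \o b)(c \o d) = a\, E_T(bc) \o d$ with unit $\sum_i x_i \o y_i$. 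The base case $n=1$ is exactly this applied to $T = R$, $U = S$, $E_T = E$: a direct expansion of $\lambda_{r_1} \circ E \circ \lambda_{r_2} \circ \lambda_{r_3} \circ E \circ \lambda_{r_4}$ recovers (\ref{eq: rtwo}), and $\id_R$ corresponds to $1_1 = \sum_i x_i \o y_i$.

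For the inductive step I would assume the multiplication, the Frobenius homomorphism, and the dual bases are as claimed on $R_{n-1}$ and $R_n$, and apply the sublemma with $T = R_n$, $U = R_{n-1}$, and $E_T = E_{n-1}$ given inductively by (\ref{eq: evenE}) or (\ref{eq: oddE}) according to the parity of $n$. One then identifies $R_n \o_{R_{n-1}} R_n$ with $R_{n+1} = R^{\o_S (n+1)}$ along the $S$-multilinear collapse dictated by the inclusion $R_{n-1} \into R_n$ of (\ref{eq: oddincl})/(\ref{eq: evenincl}). Substituting the inductive form of $E_{n-1}$ into the slot $a\, E_{n-1}(bc) \o d$ and unwinding the contraction produces the nested chain of $E$'s appearing in (\ref{eq: even}) and (\ref{eq: odd}); the parity of $n$ governs whether the central tensor slot carries $1_R$ or a Casimir element $\sum_i x_i \o y_i$, which is exactly the distinction between the units (\ref{eq: even1one}) and (\ref{eq: even1}).

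The auxiliary structures then follow from the established product. The inclusion $R_n \into R_{n+1}$ is $r_{[n]} \mapsto r_{[n]} 1_n$ by definition of $\lambda$, so (\ref{eq: oddincl})/(\ref{eq: evenincl}) come from multiplying out $r_{[n]} \cdot 1_n$ with the now-known product and identity, using $\sum_i x_i \o y_i \in (R \o_S R)^R$ to commute the Casimir past factors from $R$; the module formula (\ref{eq: bimod}) is the restriction of this product along iterated inclusions. The Frobenius maps (\ref{eq: evenE})/(\ref{eq: oddE}) and dual bases (\ref{eq: x})/(\ref{eq: y}) arise by iterating the canonical system $E_1(r \o r') = rr'$, $\{x_i \o 1_R\}$, $\{1_R \o y_i\}$ on $\E_1$, verifying the Frobenius equations (\ref{eq: FEQ's}) at each level together with the contraction identity $\sum_i x^n_i \o_{R_n} y^n_i = 1_{n+1}$. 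Finally, (\ref{eq: evenI})/(\ref{eq: oddI}) follow by propagating the level-one change-of-system map $r \o r' \mapsto r \o d^{-1} r'$ through the same recursion and checking that it intertwines the $E$- and $F$-products and commutes with the inclusions.

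The main obstacle will be the bookkeeping in the inductive collapse $R_n \o_{R_{n-1}} R_n \cong R_{n+1}$: one must track exactly which tensor slots are contracted by $E_{n-1}$ and confirm that the nesting of the $E$'s, together with the position of the central $1_R$ versus the Casimir element, emerges with the correct index ranges. Since each inductive step flips parity, the even and odd formulas must be carried in tandem, and the most error-prone point is verifying that the middle-slot insertion produced by the inclusions matches the unit element demanded by the product one level higher.
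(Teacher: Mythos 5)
Your strategy is sound and would prove the theorem, but it is organized quite differently from the paper's proof, so it is worth comparing the two. You run a direct transport-of-structure induction: at each level you apply the standard fact that composition on $\End T_U$ becomes $(a\o b)(c\o d) = a\,E_T(bc)\o d$ on $T \o_U T$, take $T = R_n$, $U = R_{n-1}$, identify $R_n \o_{R_{n-1}} R_n$ with $R_{n+1}$ by cancellation, and unwind the nested contractions by hand, carrying the Frobenius data $(E_n, x^n_i, y^n_i)$ along as part of the inductive hypothesis. The paper instead introduces the Temperley--Lieb generators $e_n = 1_{n-1} \o_{R_{n-2}} 1_{n-1} \in R_{n+1}$, verifies once that the claimed explicit formulas satisfy the braid-like relations (\ref{eq: TL1}) and the relations (\ref{eq: TL2})--(\ref{eq: TL4}) together with the Frobenius equations (\ref{eq: FEQ's}), observes that the inductively defined tower structures satisfy the same relations and agree on and below $R_2$, and then closes the induction by factoring an arbitrary tensor as a word in elements of $R$ and the $e_i$, as in (\ref{eq: tenser}); the dual-basis formulas (\ref{eq: x})--(\ref{eq: y}) then fall out of $x^n_i = x_i e_1 \cdots e_n$, $y^n_i = e_n \cdots e_1 y_i$ and uniqueness of dual bases for a fixed Frobenius homomorphism, with essentially no further computation. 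What your route buys is self-containedness: no auxiliary generators, and every formula is derived in one sweep from the endomorphism-ring isomorphism; the cost is that the parity-sensitive slot bookkeeping in the collapse $R_n \o_{R_{n-1}} R_n \cong R_{n+1}$ --- exactly the obstacle you flag --- must be redone inside every inductive step, since the nested chain of $E$'s in (\ref{eq: even}) and (\ref{eq: odd}) has to be reassembled each time the parity flips. The paper's route concentrates all computation into a finite list of relation checks (left there as exercises) and replaces slot-tracking by word calculus in the $e_i$, which additionally exposes structure of independent interest (the braid-like relations and the subfactor analogy). Both arguments rest on the same anchor: Onodera's coordinate system $E_1(r \o r') = rr'$, $\{x_i \o 1_R\}$, $\{1_R \o y_i\}$ on $\E_1 \cong R \o_S R$, and the identification $\E_n \cong \E_{n-1} \o_{\E_{n-2}} \E_{n-1}$ of (\ref{eq: iterate}).
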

\begin{proof} 
First define Temperley-Lieb generators  iteratively by $e_n = 1_{n-1} \o_{R_{n-2}} 1_{n-1} \in R_{n+1}$ for $n = 1,  2, \ldots$, which results in the explicit formulas, 
\begin{eqnarray}
e_{2n} &=& \sum_{i_1,\ldots,i_{n+1}} x_{i_1} \o \cdots \o x_{i_n} \o y_{i_n}x_{i_{n+1}} \o y_{i_{n+1}} \o y_{i_{n-1}} \o \cdots \o y_{i_1} \\  \nonumber
e_{2n+1} & = & \sum_{i_1,\ldots,i_n} x_{i_1} \o \cdots \o x_{i_n} \o 1_R \o 1_R \o y_{i_n} \o \cdots \o y_{i_1} 
\end{eqnarray}
These satisfy braid-like relations  \cite[p.\ 106]{KS}; namely, 
\begin{equation}
\label{eq: TL1}
e_i e_j = e_j e_i, \ \ | i - j | \geq 2, \ \ \
e_{i+1} e_i e_{i+1} = e_{i+1}, \ \ e_i e_{i+1} e_i = e_i 1_{i+1}.
\end{equation}  (The generators above
fail to be idempotents to the extent that $E(1)$ differs from $1$.) The proof that the formulas above are the correct outcomes of the inductive definitions may be given 
in terms of Temperley-Lieb generators, braid-like relations and important relations 
\begin{equation}
\label{eq: TL2}
e_n x e_n = e_n E_{n-1}(x), \ \ \forall x \in R_n
\end{equation}
\begin{equation}
\label{eq: TL3}
ye_n = E_n(ye_n)e_n, \ \ \forall y \in R_{n+1}, \ \ \ E_n(e_n) = 1_{n-1}
\end{equation}
\begin{equation}
\label{eq: TL4}
xe_n = e_n x, \ \ \forall x \in R_{n-1}
\end{equation}
 \cite[p.\ 106]{KS} (for background see \cite{GHJ}) 
as well as the symmetric left-right relations.  These relations and the Frobenius equations (\ref{eq: FEQ's}) may be checked to hold in terms of the equations
above in a series of exercises left to the reader.   

 The formulas for the Frobenius bases follow from the iteratively apparent $x^n_i = x_i e_1 e_2 \cdots e_n$ and
 $y^n_i = e_n \cdots e_2 e_1 y_i$ and uniqueness of bases w.r.t.\ same Frobenius homomorphism. 
In fact $e_n \cdots e_2 e_1 r= 1_{n-1} \o r$ for any $r \in R, n = 1,2,\ldots$ (and symmetrically)
as well as $1_n = \sum_i x_i e_1 \cdots e_{n-1} e_n e_{n-1} \cdots e_1 y_i$.   

 Since the inductive definitions of the ring and modules structures on the $R_n$'s also satisfy the relations listed
above, and agree on and below $R_2$, the proof is finished with an induction argument based on
expressing tensors as words in Temperley-Lieb generators and elements of $R$.  

We note that 
\begin{equation}
\label{eq: tenser}
a_1 \o \cdots \o a_{n+1} = (a_1 \o \cdots \o a_n)(1_{n-1} \o a_{n+1})
\end{equation}
$$ = (a_1 \o \cdots \o a_{n-1})(1_{n-2} \o a_n) (e_n \cdots e_1 a_{n+1}) $$

$$ = \cdots = a_1 (e_1 a_2) (e_2 e_1 a_3) \cdots (e_{n-1} \cdots e_1 a_n) (e_n \cdots e_1 a_{n+1})
$$
The formulas for multiplication (\ref{eq: odd}), (\ref{eq: even}) and (\ref{eq: bimod}) follow from induction
and applying the relations (\ref{eq: TL1}) through (\ref{eq: TL4}).
\end{proof} 
  
For the next proposition the main point above is that given a Frobenius extension there is a ring structure on the $C_n(R,S)$'s satisfying the hypotheses below (for we compare with (\ref{eq: bimod})). This is true as well if $R$ is a commutative ring with
$S$ a subring, since the ordinary tensor algebra on $R \o_S R$ may be extended to any number
of tensor products.

\begin{prop}
\label{prop-d3}
Let $R \| S$ be a ring extension.  Suppose  that there is a ring structure on each $R_n := C_n(R,S)$ for
each $n \geq 0$,
a ring homomorphism $R_{n-1} \to R_n$ for each $n \geq 1$, and that the composite $R \to R_n$
induces the natural bimodule given by $r \cdot (r_1 \o \cdots r_n) \cdot r'  = rr_1 \o r_2 \o \cdots \o r_nr'$.  Then $R \| S$ has depth $2n + 1$ if and only if $R_n \| S$
has depth $3$. 
\end{prop}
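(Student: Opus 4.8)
The plan is to reduce both sides to containment statements among the $S$-bimodules $C_k(R,S)$ and then compare them directly. Write $A := R_n$. The standing hypothesis that the composite $R \to R_n$ induces the natural bimodule $r \cdot (r_1 \o_S \cdots \o_S r_n) \cdot r' = rr_1 \o_S \cdots \o_S r_n r'$ says exactly that, pulled back along $S \to R$, the $S$-bimodule $A$ is $C_n(R,S)$ with its standard $S$-bimodule structure. Hence
$$ A \o_S A \;=\; C_n(R,S) \o_S C_n(R,S) \;\cong\; C_{2n}(R,S) $$
as $S$-bimodules, since forming $\o_S$ glues the last tensor factor of the first copy to the first tensor factor of the second, reproducing $R \o_S \cdots \o_S R$ ($2n$ times $R$). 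Therefore the depth $3$ condition for $A \| S$, namely $C_2(A,S) = A \o_S A \h A = C_1(A,S)$ as $S$-bimodules, reads $C_{2n}(R,S) \h C_n(R,S)$; and since $C_n(R,S) \| C_{2n}(R,S)$ holds automatically (compose the face/degeneracy summand inclusions $C_n(R,S) \| C_{n+1}(R,S) \| \cdots \| C_{2n}(R,S)$), it is equivalent to the single containment $C_{2n}(R,S) \| C_n(R,S)^{q'}$. Similarly, the depth $2n+1$ condition for $R \| S$ is $C_{n+1}(R,S) \h C_n(R,S)$, equivalent to $C_{n+1}(R,S) \| C_n(R,S)^{q}$.

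Granting this, the implication from depth $3$ of $R_n \| S$ to depth $2n+1$ of $R \| S$ is immediate: from $C_{n+1}(R,S) \| C_{2n}(R,S)$ (again by transitivity of the degeneracy summand inclusions, valid since $n \geq 1$) together with the depth $3$ hypothesis $C_{2n}(R,S) \| C_n(R,S)^{q'}$, one obtains $C_{n+1}(R,S) \| C_n(R,S)^{q'}$, which is the depth $2n+1$ condition.

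For the converse I would invoke the stabilization observation recorded in Section~2: once $R \| S$ has depth $2n+1$, so that $C_{n+1}(R,S) \h C_n(R,S)$, repeatedly tensoring this h-equivalence on the right by ${}_SR_S$ over $S$ and applying (\ref{eq: lem'}) yields $C_{n+m}(R,S) \h C_n(R,S)$ for every $m \geq 0$. Taking $m = n$ gives $C_{2n}(R,S) \h C_n(R,S)$, which is precisely the depth $3$ condition for $R_n \| S$.

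The only point requiring genuine care is the opening identification: that $R_n$ pulled back along $S \to R_n$ carries the natural $C_n(R,S)$ bimodule structure, and that $R_n \o_S R_n \cong C_{2n}(R,S)$ as $S$-bimodules. Both are furnished by the hypothesis on the induced bimodule; note that the ring structures on the $R_n$ enter only to make the extension $R_n \| S$ meaningful, while the depth computation itself is purely bimodule-theoretic. Once these identifications are in hand the rest is bookkeeping with the relation $\|$, using only its transitivity, the ever-present degeneracy summands $C_m(R,S) \| C_{m+1}(R,S)$, and its stability under $- \o_S R$.
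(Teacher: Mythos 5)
Your proposal is correct and takes essentially the same route as the paper's proof: both hinge on the identification $R_n \o_S R_n \cong C_{2n}(R,S)$ as $S$-bimodules, proving the forward direction by repeatedly tensoring the depth-$(2n+1)$ h-equivalence by ${}_SR_S$ (the paper's ``induction of modules'') and the converse by combining $C_{n+1}(R,S) \| C_{2n}(R,S)$ with the depth-$3$ relation $C_{2n}(R,S) \| C_n(R,S)^{q'}$. The only cosmetic difference is that you obtain $C_{n+1}(R,S) \| C_{2n}(R,S)$ by iterating the degeneracy summand inclusions, whereas the paper exhibits the single split $S$-bimodule epimorphism $r_1 \o \cdots \o r_{2n} \mapsto r_1 \cdots r_n \o r_{n+1} \o \cdots \o r_{2n}$.
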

\begin{proof}
If $R \| S$ has depth $2n+1$, then $R_n \h R_{n+1}$ as $S$-bimodules.  By induction of modules,
also $R_n \h R_{2n}$ as $S$-bimodules.  But $R_{2n} \cong R_n \o_S R_n$.  Then
$R_n \| S$ has depth three.  

Conversely, if $R_n \| S$ has depth $3$, then $R_{2n} \h R_n$ as $S$-bimodules.  But
$R_{n+1} \| R_{2n}$ via the split $S$-bimodule epi $r_1 \o \cdots \o r_{2n} \mapsto
r_1 \cdots r_n \o r_{n+1} \o \cdots \o r_{2n}$.  Then $R_{n+1} \| R_{n}^q$ for some
$q \in \Z_+$.  It follows that $R \| S$ has depth $2n+1$.  
\end{proof} 

We may in turn embed a depth three extension into a ring extension having depth two.  
The proof requires the QF condition. Retain the notation for the endomorphism ring introduced earlier in this section.  

\begin{theorem}
\label{thm-twothree}
Suppose $R \| S$ is a QF extension.  If $R \| S$ has depth $3$, then $\E \| S$ has
depth $2$. Conversely, if $\E \| S$ has depth $2$, and $R_S$ is a generator, then $ R \| S$
has depth $3$.
\end{theorem}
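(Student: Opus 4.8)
The plan is to reduce both implications to the single depth-three condition $R \o_S R \h R$ as $S$-bimodules, and to control the one extra $\E$-module structure by a judicious choice of model for $\E$. First I record what I will use. Since $S \to R$ and $R \to \E$ are both QF extensions (the latter by the endomorphism ring theorem), the composite $S \to \E$ is QF by transitivity, so by Theorem~\ref{th-rightleft} it suffices to verify either the left or the right depth-two condition for $\E \| S$, i.e. $\E \o_S \E \h \E$ as $S$-$\E$-bimodules. Because $R_S$ is finitely generated projective there is a bimodule isomorphism $\E = \End R_S \cong R \o_S R^*$, where $R^* := \Hom(R_S,S_S)$, under which the left $\E$-action is carried by the first tensorand $R$ and the right $\E$-action (precomposition) is carried by $R^*$; moreover $R^* \h R$ as $S$-bimodules by the QF hypothesis. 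The point of this model is that it \emph{localizes} the right $\E$-action onto the single factor $R^*$, so that this structure survives any manipulation performed on the remaining $R$-factors.

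For the forward direction I would assume $R \o_S R \h R$ as $S$-bimodules (depth three). Writing $\E \cong R \o_S R^*$ on both factors gives $\E \o_S \E \cong R \o_S R^* \o_S R \o_S R^*$ as $S$-$\E$-bimodules, with the left $S$ on the first $R$ and the right $\E$ on the last $R^*$. I then collapse the inner block $R^* \o_S R$: by $R^* \h R$ and depth three it is $\h R$ as $S$-bimodules, and by (\ref{eq: lem'}) this substitution leaves the terminal factors untouched, producing $R \o_S R \o_S R^*$. A second application of depth three collapses the first block $R \o_S R \h R$, again only as $S$-bimodules, leaving $R \o_S R^* \cong \E$. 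Hence $\E \o_S \E \h \E$ as $S$-$\E$-bimodules, so $\E \| S$ has left depth two and therefore depth two. Note that no generator hypothesis enters here.

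For the converse I would assume $\E \o_S \E \h \E$ as $S$-$\E$-bimodules together with $R_S$ a generator; then $R_S$ is a progenerator, so that $S$ and $\E$ are Morita equivalent through ${}_S R^*_\E$ and ${}_\E R_S$, with $R^* \o_\E R \cong S$. The functor $- \o_\E R$ is additive and sends $S$-$\E$-bimodules to $S$-bimodules, hence preserves $\h$ by Lemma~\ref{lem-h}; applying it to $\E \o_S \E \h \E$ and using $\E \o_\E R \cong R$ yields $\E \o_S R \h R$ as $S$-bimodules. On the other hand $\E \o_S R \cong R \o_S R^* \o_S R$, and replacing the middle factor via $R^* \h R$ gives $\E \o_S R \h R \o_S R \o_S R = C_3(R,S)$. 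Combining, $C_3(R,S) \h C_1(R,S)$ as $S$-bimodules; since $C_1(R,S) \| C_2(R,S) \| C_3(R,S)$ always holds, this forces $C_2(R,S) \h C_1(R,S)$, which is exactly depth three for $R \| S$.

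The main obstacle is not the $S$-bimodule bookkeeping — as $S$-bimodules every $\E^{\o_S k}$ and every $C_j(R,S)$ collapse freely once $R \o_S R \h R$ — but keeping the asymmetric $S$-$\E$ structure intact while collapsing, since the depth-three equivalence is purely an $S$-bimodule statement. The model $\E \cong R \o_S R^*$, which pins the surviving $\E$-action to the terminal factor $R^*$, is what makes each collapse a legitimate instance of (\ref{eq: lem'}); I would check every substitution against (\ref{eq: lem'}) to be sure no one-sided structure is silently lost. In the converse the corresponding difficulty is descending an $\E$-level equivalence to an $S$-level one, and this is where the generator hypothesis does its work, guaranteeing the progenerator/Morita identity $R^* \o_\E R \cong S$ that lets $- \o_\E R$ transport the equivalence faithfully down to $R$ and $S$.
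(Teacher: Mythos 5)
Your proof is correct, and its skeleton is the paper's: pin the surviving one-sided $\E$-structure on a single tensor factor, collapse everything else by the QF substitution $R^* \h R$ together with the depth-three condition via (\ref{eq: lem'}), use Theorem~\ref{th-rightleft} (with $\E \| S$ QF by the endomorphism ring theorem and transitivity) to pass between the left and right even-depth conditions, and finish the converse with $C_3(R,S) \h C_1(R,S)$ plus the always-split inclusions $C_1 \| C_2 \| C_3$. But your argument is the mirror image of the paper's, and the mirror is not an idle difference. The paper works with the \emph{right} depth-two condition ($\E$-$S$-bimodules, $\E$ acting on the leftmost factor of $R \o_S R \o_S R \o_S R$), so in the converse it must cancel the bimodule ${}_{\E}R_S$ using the Morita context identity $\Hom (R_S,S_S) \o_{\E} R \cong S$ --- which is precisely where the generator hypothesis enters its proof. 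You work instead with the \emph{left} depth-two condition ($S$-$\E$-bimodules, $\E$-action pinned on the terminal $R^*$ in $\E \cong R \o_S R^*$), and your cancellation is the additive functor $- \o_{\E} R$ applied to ${}_S\E_{\E} \h {}_S(\E \o_S \E)_{\E}$, which by Lemma~\ref{lem-h} uses only the \emph{free} identities $\E \o_{\E} R \cong R$ and $(\E \o_S \E) \o_{\E} R \cong \E \o_S R \cong R \o_S R^* \o_S R \h C_3(R,S)$. Notice the consequence: although you announce $R^* \o_{\E} R \cong S$ as the place ``where the generator hypothesis does its work,'' that identity never appears in your chain --- the Morita setup in your converse is stated but unused. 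Since converting ``depth $2$'' into the left depth-two condition costs nothing for the QF extension $\E \| S$ (Theorem~\ref{th-rightleft} has no generator hypothesis), your converse as written actually establishes the stronger, generator-free statement that $\E \| S$ of depth $2$ implies $R \| S$ of depth $3$ for any QF extension; the hypothesis $R_S$ a generator is decoration in your write-up, whereas it is load-bearing in the paper's left-handed route (and genuinely needed elsewhere, e.g.\ in Lemma~\ref{lem-QF} and Theorem~\ref{th-towerd=subrd}). You should either claim this sharpening explicitly, after re-checking that each step of the cancellation $- \o_{\E} R$ respects the bimodule structures as you have set them up, or identify the step you believed required it; as it stands there is no such step.
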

\begin{proof}
Since $R$ is a QF extension of $S$, we have $\E \h R \o_S R$ as $\E$-$R$-bimodules.  
Then $\E \o_S \E \h R \o_S R \o_S R \o_S R$ as $\E$-$S$-bimodules.  Given the depth $3$ condition,
 $R \o_S R \h R$
as $S$-bimodules, it follows by two substitutions that $\E \o_S \E \h R \o_S R$ as $\E$-$S$-bimodules.
Consequently, $\E \o_S \E \h \E$ as $\E$-$S$-bimodules.  Hence, $\E \| S$ has right depth $2$, and since it
is a QF extension by the endomorphism ring theorem and transitivity,  $\E \| S$ also has left depth $2$.  

Conversely, we are given $R_S$  a progenerator, so that $\E$ and $S$ are Morita equivalent
rings, where ${}_S\Hom (R_S, S_S)_{\E}$ and ${}_{\E}R_S$ are the context bimodules.
If $\E \| S$ has depth two, then $\E \o_S \E \h \E$ as  $\E$-$S$-bimodules.  Then $R \o_S R \o_S R \o_S R \h R \o_S R$ as $\E$-$S$-bimodules.  Since $\Hom (R_S, S_S) \o_{\E} R \cong S$ as $S$-bimodules,
a cancellation of the bimodules ${}_{\E}R_S$ follows, so $R \o_S R \o_S R \h R$ as $S$-bimodules.  
Since $R \o_S R \| R \o_S R\o_S R$, it follows that $R \o_S R \| R^q$ for some $q \in \Z_+$.
Then $R \| S$ has depth $3$.  
\end{proof}

\section{Tower depth vs.\ depth of subrings}

In this section we review tower depth from \cite{K2008} and find a general case when it is the same as subring depth
defined in~(\ref{eq: d2n+1}) and in \cite{BDK}.  We first require a generalization of left and right
depth $2$ to a tower of three rings.  
We say that a  tower $R \| S \| T$
where $R \| S$ and $S \| T$ are ring extensions,  has \textit{generalized right depth $2$} if 
$R \o_S R \h R$ as natural $R$-$T$-bimodules (where mappings $T \to S \to R$ are composed
to induce the module $R_T$).  (Note that if $T = S$, this is the definition of the ring extension $R \| S$
having right depth $2$. )  
  
Throughout the section below we suppose $R \| S$ a Frobenius extension and $\E_i \into \E_{i+1}$ its tower above it, as defined
in (\ref{eq: tower}) and the ensuing discussion in Section~\ref{sec: FE}.   Following \cite{K2008} (with a small change in vocabulary), we say that $R \| S$ has \textit{right tower depth $n \geq 2$}
if the sub-tower of composite ring extensions $S \to \E_{n-3} \into \E_{n-2}$ has generalized right depth $2$; i.e.,
as natural $\E_{n-2}$-$S$-bimodules, 
\begin{equation}
\label{eq: towerdepth}
\E_{n-2} \o_{\E_{n-3}} \E_{n-2} \  \oplus 
 * \  \cong \  \E_{n-2}^q
\end{equation}
for some positive integer $q$, since the reverse condition is always satisfied.  Since $\E_{-1} = S$ and $\E_0 = R$, this recovers the right depth two condition on a subring $S$ of $R$.  
To this definition we add that a Frobenius extension $R \| S$ has depth $1$ if it is a  centrally projective ring extension; i.e., ${}_SR_S \| S^q$ for some $q \in \Z_+$.  
Left tower depth $n$ is just defined using (\ref{eq: towerdepth}) but as natural $S$-$\E_{n-2}$-bimodules. By \cite[Theorem 2.7]{K2008} the left and right tower depth $n$ conditions are equivalent
on Frobenius extensions.  

From the definition  of tower depth and a comparison of (\ref{eq: iterate}) and (\ref{def-depth})
with $I = R$, the following lemma is obtained:

\begin{lemma}
Suppose $S \subseteq R$ is a subring such that $R $ is a Frobenius extension of $S$.  
If $R \| S$ has tower depth $n$, then $S \subseteq R $ has depth $2n-2$ for each $n = 1,2,\ldots$.
\end{lemma}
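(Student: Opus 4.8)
The plan is to unwind the tower-depth condition at level $n$ into the even subring-depth condition at level $2n-2$, using the ring isomorphisms $\E_{n-1} \cong C_n(R,S)$ from Section~\ref{sec: FE} together with the iterated endomorphism presentation (\ref{eq: iterate}). I would treat the substantive range $n \geq 2$; the case $n=1$ is the defining one, where tower depth $1$ is central projectivity ${}_SR_S \| S^q$, matching depth $2n-2 = 0$.

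Since the left and right tower depth $n$ conditions agree on a Frobenius extension by \cite[Theorem 2.7]{K2008}, I may work with the right version (\ref{eq: towerdepth}), namely $\E_{n-2} \o_{\E_{n-3}} \E_{n-2} \| \E_{n-2}^q$ as $\E_{n-2}$-$S$-bimodules. The first step is to substitute the iteration isomorphism (\ref{eq: iterate}): as $\E_{n-1} \cong \E_{n-2} \o_{\E_{n-3}} \E_{n-2}$ is an isomorphism of rings, it is in particular one of $\E_{n-2}$-bimodules, hence of $\E_{n-2}$-$S$-bimodules, and the condition becomes $\E_{n-1} \| \E_{n-2}^q$ as $\E_{n-2}$-$S$-bimodules.

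Next I would pass to the tensor-power picture via $\E_{n-1} \cong C_n(R,S)$ and $\E_{n-2} \cong C_{n-1}(R,S)$, and restrict the $\E_{n-2}$-action along the bottom inclusion $R = \E_0 \into \E_{n-2}$. Reading off (\ref{eq: bimod}) with $m = 1$, this restricted left action is the natural left $R$-multiplication on the first tensor factor, while the right $S$-action is the natural one on the last factor; so restriction of scalars identifies the $\E_{n-2}$-$S$-bimodules $C_n(R,S)$ and $C_{n-1}(R,S)$ with their natural $R$-$S$-bimodule structures. Because a direct summand remains a direct summand after restriction of scalars, I obtain $C_n(R,S) \| C_{n-1}(R,S)^q$ as $R$-$S$-bimodules, which is exactly the right depth $2(n-1) = 2n-2$ condition of Definition~\ref{def-depth} (compare (\ref{eq: d2n+1})). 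Since $d(S,R)$ is the least of the one-sided even and odd depths, this already gives that $S \subseteq R$ has depth $2n-2$; alternatively, as a Frobenius extension is QF, Theorem~\ref{th-rightleft} upgrades the right condition to the two-sided one.

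The main point requiring care is the bimodule bookkeeping in the middle step: one must verify through (\ref{eq: bimod}) that restricting the $\E_{n-2}$-$S$-bimodule structure along $R \into \E_{n-2}$ returns precisely the natural $R$-$S$-bimodule structure entering the subring-depth definition, and that the index shift lines up ($\E_{n-1} \leftrightarrow C_n$, $\E_{n-2} \leftrightarrow C_{n-1}$, giving $2n-2$ rather than $2n$ or $2n-4$). Everything else is substitution into (\ref{eq: iterate}) and the monotonicity of the relation $\|$ under restriction of scalars.
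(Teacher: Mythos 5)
Your proposal is correct and takes essentially the same route as the paper, whose entire proof reads that from (\ref{eq: towerdepth}) one obtains $R_n \| R_{n-1}^q$ as $R$-$S$-bimodules with ``the rest sorting out notation and indices'' --- exactly the substitution of (\ref{eq: iterate}), the identifications $\E_{n-1} \cong C_n(R,S)$, $\E_{n-2} \cong C_{n-1}(R,S)$, and the restriction of scalars along $R \into \E_{n-2}$ via (\ref{eq: bimod}) that you spell out. Your auxiliary remarks (choosing the right-handed condition via \cite[Theorem 2.7]{K2008}, the optional upgrade through Theorem~\ref{th-rightleft}, and the informal treatment of the degenerate case $n=1$) are consistent with, and no looser than, the paper's own terse argument.
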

\begin{proof}
From (\ref{eq: towerdepth}) we obtain $R_{n} \| R_{n-1}^q$ as $R$-$S$-bimodules; the rest of the proof is sorting out notation and indices.  
\end{proof}

From \cite[Lemma 8.3]{K2008}, it follows that if $R \| S$ has tower depth $n$, it has tower depth $n+1$.  Define $d_F(R,S)$ to be the minimum tower depth if $R \| S$ has tower depth $n$ for some
integer $n$, $d_F(R,S) = \infty$ if the
condition (\ref{eq: towerdepth}) is not satisfied for any $n \geq 2$ nor is it depth $1$.  Notice that if $S \subseteq R$ is a subring with $R$ a Frobenius extension of $S$, then $d_F(R,S) = d(S,R)$ if
one of $d(S,R) \leq 2$ or $d_F(R,S) \leq 2$.  This is extended to $d_F(R,S) = d(S,R)$ if one of $d(S,R), d_F(R,S) \leq 3$ in the next lemma. 

Notice that tower depth $n$ makes sense for a QF extension $R \| S$: by elementary considerations, it has right tower depth $3$ if $S \to R \into \E$ satisfies $\E \o_R \E \h \E$ as $\E$-$S$-bimodules.  
It has been noted elsewhere  that a QF extension has right tower depth $3$ if and only if it has left tower depth $3$ by an argument essentially identical to that in \cite[Th.\ 2.8]{K2008} but replacing  Frobenius isomorphisms with quasi-Frobenius h-equivalences.  

\begin{lemma}
\label{lem-QF}
A QF extension $R \| S$ such that $R_S$ is a generator has tower depth $3$ if and only if $S$ has depth $3$ as a subring in $R$.  
\end{lemma}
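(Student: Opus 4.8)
Write $\E := \End R_S$ and $C_n := C_n(R,S) = R \o_S \cdots \o_S R$. The plan is to route the biconditional through the extension $\E \| S$. Note first that, by the endomorphism ring theorem and transitivity, $\E \| S$ is again a QF extension, so by Theorem~\ref{th-rightleft} its left and right depth $2$ conditions coincide; and by definition the tower depth $3$ condition on $R \| S$ is the h-equivalence ${}_{\E}(\E \o_R \E)_S \h {}_{\E}\E_S$. I would then prove two equivalences: (I) $R \| S$ has tower depth $3$ iff $\E \| S$ has depth $2$; and (II) $\E \| S$ has depth $2$ iff $S \subseteq R$ has subring depth $3$. Equivalence (II) is exactly Theorem~\ref{thm-twothree}, whose converse direction consumes the generator hypothesis (it makes $R_S$ a progenerator and hence $\E$ and $S$ Morita equivalent), so that the generator condition ends up being used only in the passage tower depth $3 \Rightarrow$ subring depth $3$.

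For (I) I would rewrite both conditions as h-equivalences among the $C_n$. The QF hypothesis gives $\E \h C_2$ both as $\E$-$R$- and as $R$-$\E$-bimodules; substituting these into (\ref{eq: lem'}) yields, as $\E$-bimodules,
\[
\E \o_R \E \ \h\ C_2 \o_R C_2 \cong C_3, \qquad \E \o_S \E \ \h\ C_2 \o_S C_2 = C_4, \qquad \E \h C_2 ,
\]
the first reduction using the cancellation $R \o_R R \cong R$. Restricting to $\E$-$S$-bimodules, tower depth $3$ becomes the condition $C_3 \h C_2$ and depth $2$ of $\E \| S$ becomes $C_4 \h C_2$; the pertinent left $\E$-action on each $C_n$ is the natural tower action, which by (\ref{eq: bimod}) reaches only the leading tensor factors.

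It then remains to establish the stability $C_3 \h C_2 \Leftrightarrow C_4 \h C_2$ as $\E$-$S$-bimodules. As $\E$-$S$-bimodules one has $C_2 \| C_3 \| C_4$ — the step $C_2 \| C_3$ coming from the unit and multiplication of the $R$-ring $\E$ (that is, $\E \| \E \o_R \E$), and the step $C_3 \| C_4$ from an interior degeneracy-face pair (inserting $1$, respectively multiplying, in a middle slot), which is left $\E$- and right $S$-linear by (\ref{eq: bimod}) — together with $C_4 \cong C_3 \o_S R$. Since $-\o_S R$ preserves the $\E$-$S$-structure, assuming $C_3 \| C_2^{p}$ gives $C_4 \cong C_3 \o_S R \| C_3^{p} \| C_2^{p^2}$, while $C_2 \| C_4$ is automatic, whence $C_4 \h C_2$; conversely $C_4 \| C_2^{q}$ together with $C_3 \| C_4$ gives $C_3 \| C_2^{q}$, while $C_2 \| C_3$ is automatic, whence $C_3 \h C_2$. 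This proves (I), and combining with (II) finishes the lemma.

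The main obstacle I anticipate is the bookkeeping in Step~(I): one must check that the QF h-equivalences, the cancellation $R \o_R R \cong R$, and each structure map (the unit and multiplication of $\E \o_R \E$, the degeneracy-face pair, and $-\o_S R$) are genuinely $\E$-$S$-bilinear for the natural tower action. Once (\ref{eq: bimod}) is invoked to see that this action only touches the leading tensor factors, every such verification is routine, and the generator hypothesis remains cleanly isolated inside the appeal to Theorem~\ref{thm-twothree}.
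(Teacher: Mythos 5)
Your proof is correct, but it takes a genuinely different route from the paper. The paper's own proof is direct and shorter: for ($\Rightarrow$) it translates tower depth $3$ via the QF equivalence ${}_{\E}\E_S \h {}_{\E}(R \o_S R)_S$ into $C_3(R,S) \h C_2(R,S)$ as $\E$-$S$-bimodules and then cancels the progenerator bimodule ${}_{\E}R_S$ (the Morita cancellation from the proof of Theorem~\ref{thm-twothree}) to land on ${}_SR\o_S R_S \h {}_SR_S$; for ($\Leftarrow$) it simply tensors the subring depth $3$ equivalence by ${}_{\E}R \o_S -$ and converts back. You instead factor the lemma through the intermediate statement (I): tower depth $3$ of $R \| S$ is equivalent to depth $2$ of $\E \| S$, and then quote Theorem~\ref{thm-twothree} wholesale for the passage between depth $2$ of $\E \| S$ and subring depth $3$. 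The substitution machinery in your step (I) (rewriting $\E \o_R \E \h C_3$ and $\E \o_S \E \h C_4$ as $\E$-$S$-bimodules) is the same as in the paper's proofs of Theorem~\ref{thm-twothree} and this lemma, but your stability argument $C_3 \h C_2 \Leftrightarrow C_4 \h C_2$ — using $C_4 \cong C_3 \o_S R$ together with the split maps $C_2 \| C_3 \| C_4$ away from the leftmost tensor slot — is new relative to the paper, and it buys something: it shows that (I) holds for an arbitrary QF extension with no generator hypothesis, so the generator condition is cleanly quarantined inside the one implication of Theorem~\ref{thm-twothree} that needs it, exactly as in the paper's own division of labor (its converse direction likewise avoids the generator). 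The cost is length, plus the burden of verifying the $\E$-$S$-bilinearity of your face/degeneracy maps. Two cosmetic points: your appeal to (\ref{eq: bimod}) is anachronistic, since that formula is derived for the tower above a \emph{Frobenius} extension, whereas all you need is the natural action of $\E = \End R_S$ on the leftmost factor of $C_n(R,S)$, which exists for any QF extension (it is the left structure produced by Proposition~\ref{prop-co} with $M = {}_{\E}R_S$); and your displayed equivalences should be asserted only as $\E$-$S$-bimodules rather than $\E$-bimodules, since the right $\E$-structures on $C_3$ and $C_4$ would require the mirror QF equivalences — but as you restrict to $\E$-$S$-bimodules immediately, neither point affects correctness.
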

\begin{proof}
Since $R_S$ is a generator, $R \| S$ is a proper extension by a short argument.  Assume $S \subseteq R$.  

($\Rightarrow$) By the QF property, $\E \h R \o_S R$ as $\E$-$S$-bimodules.  By the tower depth $3$ condition, $\E \o_R \E \h \E$ as $\E$-$S$-bimodules.  Then $R \o_S R \o_S R \h R \o_S R$
as $\E$-$S$-bimodules.  Since $R_S$ is a progenerator, we cancel bimodules ${}_{\E}R_S$ as in the proof of Theorem~\ref{thm-twothree} to obtain $R \o_S R \h R$ as $S$-bimodules.
Hence $S \subseteq R$ has depth $3$.  

($\Leftarrow$) 
Given ${}_SR_S \h {}_SR \o_S R_S$, by tensoring with ${}_{\E}R \o_S - $ we get $R \o_S R \h R \o_S R \o_S R$ as $\E$-$S$-bimodules.  By the QF property, $\E \o_R \E \h \E$ as
$\E$-$S$-bimodules follows, whence $R \| S$ has tower depth $3$.  
\end{proof}

The theorem below proves that subring depth and tower
depth coincide on Frobenius generator extensions.  At a certain point in the proof, we use the following
fundamental fact about the tower $R_n$ above a Frobenius extension $R\| S$:  since the compositions
of the Frobenius extensions remain Frobenius, the iterative constructions of $E$-multiplication on
tensor-squares isomorphic to endomorphism rings applies, but gives isomorphic ring structures to
those on the $R_n$.  For example, the composite extension $S \to R_n$ is Frobenius with $\End (R_n)_S \cong R_n \o_S R_n \cong R_{2n}$, isomorphic in its $E \circ E_1 \circ \cdots \circ E_{n-1}$-multiplication or its $E$-multiplication given in (\ref{eq: even}) \cite{KN}. 
 
\begin{theorem}
\label{th-towerd=subrd}
Let $S \subseteq R$ be a subring such that $R$ is a Frobenius extension of $S$ and $R_S$ is
a generator.  Then $R \| S$ has tower depth $m$ for $m = 1,2,\ldots$ if and only if
the subring $S \subseteq R$ has depth $m$. Consequently, $d_F(R,S) = d(S,R)$.  
\end{theorem}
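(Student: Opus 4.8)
The plan is to prove the two-sided implication ``tower depth $m$ $\iff$ subring depth $m$'' separately for each $m\geq 1$; the final equality $d_F(R,S)=d(S,R)$ is then immediate, since both notions are upward closed (tower depth $m$ implies tower depth $m+1$ by \cite[Lemma 8.3]{K2008}, and subring depth $m$ implies subring depth $m+1$ by the remarks after Definition~\ref{def-depth}), so the two minima agree. First I would record the convenient reformulations in terms of the tower $\E_i\cong C_{i+1}(R,S)=R_{i+1}$: the subring depth $2n$ (resp.\ $2n+1$) condition is $\E_n \h \E_{n-1}$ as $R$-$S$-bimodules (resp.\ $S$-bimodules), while the tower depth $m$ condition is $\E_{m-1}\h\E_{m-2}$ as $\E_{m-2}$-$S$-bimodules, using the iteration $\E_{m-1}\cong\E_{m-2}\o_{\E_{m-3}}\E_{m-2}$ of (\ref{eq: iterate}). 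The base cases are already in hand: depth $1$ is the centrally projective condition on both sides (the generator hypothesis supplies the split $S\,|\,R$ that equates subring depth $1$ with central projectivity); tower depth $2$ is by definition the right depth $2$ condition on $S\subseteq R$, which for a QF extension agrees with subring depth $2$ by Theorem~\ref{th-rightleft}; and $m=3$ is exactly Lemma~\ref{lem-QF}.

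For the inductive step I would run the induction in steps of two, $m\rightsquigarrow m-2$, anchored at $m=2$ (even ladder) and $m=3$ (odd ladder). This respects the doubling $\E_1\cong R\o_S R$: a single extra tensor factor on the subring side matches two tower levels on the tower side, so reducing $m$ by two keeps the two sides aligned. The engine is the Morita cancellation already used in Theorem~\ref{thm-twothree} and Lemma~\ref{lem-QF}: because $R_S$ is a progenerator, $\E_1=\End R_S$ and $S$ are Morita equivalent with context bimodules ${}_{\E_1}R_S$ and ${}_S\Hom(R_S,S_S)_{\E_1}$ satisfying $\Hom(R_S,S_S)\o_{\E_1}R\cong S$. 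Tensoring a tower-depth h-equivalence by such a dual bimodule on the left strips off the leftmost tensor factor and lowers the left-acting ring by one tower level, converting an ``over $\E_{m-2}$'' statement into an ``over a lower $\E_j$'' statement; iterating (and using the fundamental fact $\End((R_k)_S)\cong R_k\o_S R_k\cong R_{2k}$ to recognize the relevant tensor-squares, together with the Frobenius tower isomorphisms) collapses the left action all the way down to $S$, where the statement is precisely the subring depth $m$ condition. The reverse direction is obtained by building a tower-depth h-equivalence up from a subring-depth one, tensoring by ${}_{\E_1}R_S$ and applying Lemma~\ref{lem-h}; here one also invokes Proposition~\ref{prop-d3} to pass between odd subring depth and the depth $3$ condition of the intermediate extension $R_n\|S$.

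The main obstacle I expect is the bookkeeping of bimodule structures through these cancellations. Two points need care. First, each cancellation must be arranged so that the leftmost tensor factor genuinely carries the endomorphism-ring action, so that the isomorphism $\Hom(R_S,S_S)\o_{\E_1}R\cong S$ (rather than a mere h-equivalence) applies and no spurious summands are introduced; this is where the hypothesis that $R_S$ is a generator is essential, and one must check that it is inherited by every intermediate extension $R_k\|S$ so that its context bimodules are honest progenerators (which holds because a tensor power of the progenerator $R_S$ over $S$ is again a progenerator). Second, the index doubling must be matched exactly: one has to verify that peeling two tower levels corresponds to dropping exactly one tensor factor, so that height $m$ on the tower side maps to height $m$ on the subring side and not to $m\pm1$. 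The even/odd (left/right) distinction in Definition~\ref{def-depth} is handled by Theorem~\ref{th-rightleft}, which lets me use whichever one-sided even condition is convenient at each stage, while the already-proven implication that tower depth $n$ forces subring depth $2n-2$ provides the coarse comparison that the inductive argument then sharpens to an exact match.
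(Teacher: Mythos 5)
Your parity bookkeeping (one tensor factor on the subring side per two tower levels) is the right intuition, but the engine of your induction is broken. The step you describe --- tensoring a tower-depth h-equivalence by ${}_S\Hom(R_S,S_S)_{\E_1}$ over $\E_1$ so as to ``strip off the leftmost tensor factor and lower the left-acting ring by one tower level'' --- does not do that. Writing $R_k := C_k(R,S) \cong \E_{k-1}$, the tower depth $m$ condition is $R_m \| R_{m-1}^q$ as $R_{m-1}$-$S$-bimodules; to apply your functor you must first restrict the left action along $\E_1 \into R_{m-1}$ (where $\E_1$ acts on the leftmost factor of $R_m \cong R \o_S R_{m-1}$), and since the left $R_{m-1}$-action does not commute with the $\E_1$-action being cancelled, nothing of the tower action survives the cancellation except the left $S$-structure of $\Hom(R_S,S_S)$. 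One application therefore collapses the left-acting ring all the way down to $S$, not by one level: you obtain only $R_{m-1} \| R_{m-2}^q$ as $S$-bimodules, which is the subring depth $2m-3$ condition --- strictly weaker than depth $m$ for $m>3$ --- and the iteration cannot restart because no $\E_1$-structure remains. The same mismatch affects your converse: tensoring by ${}_{\E_1}R_S$ produces only a left $\E_1$-structure, whereas tower depth $m$ demands an h-equivalence of $\E_{m-2}$-$S$-bimodules. Your fallback via Proposition~\ref{prop-d3} and Lemma~\ref{lem-QF} concerns only odd depth, and even there it delivers tower depth $3$ of $R_n \| S$, whose equivalence with tower depth $2n+1$ of $R \| S$ is exactly what still needs proof.

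The repair --- and the paper's actual argument --- is to cancel a block whose endomorphism ring is the \emph{entire} left-acting ring, in one shot rather than factor by factor. For odd $m = 2n+1$, write $R_{2n+1} \cong R_n \o_S R_{n+1}$ and use that the left-acting ring is $R_{2n} \cong \End (R_n)_S$; since $(R_n)_S$ is a progenerator (Lemma~\ref{lem-gen}, the endomorphism ring theorem and transitivity of the generator property --- your observation about tensor powers of $R_S$ is the right idea here), the Morita context between $\End (R_n)_S$ and $S$ cancels the whole middle block $R_n$, yielding precisely $R_{n+1} \| R_n^q$ as $S$-bimodules; conversely one applies $R_n \o_S -$, which lands directly in $\End(R_n)_S$-$S$-bimodules with no levelwise climbing. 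For even $m = 2n$ the paper cancels $(R_n)_R$ against $R_{2n-1} \cong \End (R_n)_R$, which is Morita equivalent to $R$ with \emph{no} generator hypothesis --- a structural point your plan misses, since your induction threads the $R_S$-generator assumption through both parities. If you insist on a step-by-two induction, the correct engine is the Morita equivalence between $\E_k$ and $\E_{k-2}$ with context bimodule ${}_{\E_k}(\E_{k-1})_{\E_{k-2}}$, which descends $R_m \| R_{m-1}^q$ over $R_{m-1}$ to $R_{m-1} \| R_{m-2}^q$ over $R_{m-3}$ (left index down by one, acting ring down by two), terminating at $S$ in the odd case --- the final step being the only one that needs $R_S$ a generator --- and at $R$ in the even case; but in every step the cancelled bimodule must match the full left-acting ring, which is exactly what your $\E_1$-$S$ stripping fails to arrange.
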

\begin{proof}
The cases $m = 1, 2, 3$ have been dealt with above.  We divide the rest of the proof into odd
$m$ and even $m$.  The proof for odd $m = 2n+1$: 
($\Rightarrow$) If $R \| S$ has tower depth $2n+1$, then $R_{2n} \o_{R_{2n-1}} R_{2n} \| R_{2n}^q$ as $R_{2n}$-$S$-bimodules.  Continuing with $R_{2n} \cong R_{2n-1} \o_{R_{2n-2}} R_{2n-1}$, iterating and performing standard cancellations, 
we obtain 
\begin{equation}
\label{eq: eagles}
R_{2n+1} \| R_{2n}^q
\end{equation}
 as $\End (R_n)_S$-$S$-bimodules.  But the module
$(R_n)_S$ is a generator for all $n$ by Lemma~\ref{lem-gen}, the endomorphism ring theorem for
Frobenius generator extensions and transitivity of generator
property for modules (if $M_R$ and $R_S$ are generators, then restricted module $M_S$ is
clearly a generator). It follows that $(R_n)_S$ is a progenerator and cancellable as an
$\End (R_n)_S$-$S$-bimodule (applying the Morita theorem as in the proof of Theorem~\ref{thm-twothree}).  Then
${}_S(R_{n+1})_S \| {}_S(R_n)_S$ after cancellation of $R_n$ from (\ref{eq: eagles}), which is the depth $2n+1$ condition in (\ref{eq: d2n+1}).  

($\Leftarrow$) Suppose $R_{n+1} \oplus * \cong R_n$ as $S$-bimodules.  Apply to this the
additive functor $R_n \o_S - $ from category of $S$-bimodules into the category of $\End (R_n)_S$-$S$-bimodules.  We obtain (\ref{eq: eagles}) which is equivalent to the tower depth $2n+1$ condition of $R \| S$.  

The proof in the even case, $m = 2n$ does not need the generator condition (since even non-generator
Frobenius extensions have endomorphism ring extensions that are generators):

($\Rightarrow$) Given the tower depth $2n$ condition $R_{2n-1} \o_{R_{2n-2}} R_{2n-1} \cong R_{2n}$ is isomorphic as $R_{2n-1}$-$S$-bimodules to a direct summand in $R_{2n-1}^q$ for
some positive integer $q$.  Introduce a cancellable extra term in $R_{2n} \cong R_n \o_R R_{n+1}$
and in $R_{2n-1}\cong R_n \o_R R_n$.  Now note that $R_{2n-1} \cong \End (R_n)_R$ which is Morita equivalent to $R$.  After cancellation of the $\End (R_n)_R$-$R$-bimodule $R_n$, we obtain
$R_{n+1} \| R_n$ as $R$-$S$-bimodules as required by (\ref{eq: d2n+1}).

($\Leftarrow$) Given ${}_R (R_{n+1})_S \| {}_R (R_n)_S$, we apply ${}_{\End (R_n)_R} R_n \o_R -$
obtaining $R_{2n} \| R_{2n-1}$ as $R_{2n-1}$-$S$-bimodules, which is equivalent to the tower depth
$2n$ condition.
\end{proof}
A depth $2$ extension $R \| S$ often has easier equivalent conditions,  e.g., a normality condition, to fulfill than the $S$-$R$-bimodule condition $R \o_S R \| R^q$ \cite{BK}.  Therefore
the next corollary (or one like it stated more generally for Frobenius extensions) is interesting in pursuing questions of whether a special type of ring
extension has finite depth (and placing finite depth ring extensions in the context
of a Galois-normal extension).  
The corollary follows from \cite[8.6]{K2008}, Proposition~\ref{prop-d3} and Theorem~\ref{thm-twothree}.
\begin{cor}
Let $K \subseteq H$ be a Hopf subalgebra pair of finite dimensional unimodular Hopf algebras.
Then $K$ has finite depth in $H$ if and only if there is a tower algebra $H_m$  
such that $K \subseteq H_m$ has depth $2$.  
\end{cor}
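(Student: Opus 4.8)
The plan is to exploit the fact that a unimodular Hopf subalgebra pair is automatically a Frobenius generator extension, and then to read the corollary off from the two equivalences supplied by Proposition~\ref{prop-d3} and Theorem~\ref{thm-twothree}. First I would record the setup. Since $H$ and $K$ are both unimodular, their modular functions are trivial, $m_H = \eps = m_K$, so in particular $m_H = m_K$ on $K$; by the Corollary above this forces $\beta = \id_K$, hence $H \| K$ is a Frobenius extension. By \cite[8.6]{K2008} the module $H_K$ is moreover a generator (it is even free, by the Nichols--Zoeller theorem). Consequently the tower algebras $H_n := C_n(H,K) = H \o_K \cdots \o_K H$ ($n$ times) carry the $E$-multiplications of Section~\ref{sec: FE}, the composite $H \to H_n$ induces the natural $H$-bimodule required in Proposition~\ref{prop-d3}, each $(H_n)_K$ is again a generator by transitivity, and $\End (H_n)_K \cong H_n \o_K H_n \cong H_{2n}$ as rings (the fundamental fact preceding Theorem~\ref{th-towerd=subrd}).

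Next I would assemble the chain of equivalences for the forward direction. Because an extension of depth $2n$ automatically has depth $2n+1$, \emph{finite depth} of $K \subseteq H$ is equivalent to $H \| K$ having depth $2n+1$ for some $n \geq 1$ (the cases of depth $\leq 2$ being immediate, since then $K \subseteq H$ itself already has depth $2$). Proposition~\ref{prop-d3}, applied to these ring structures on the $H_n$, turns the depth $2n+1$ condition into: $H_n \| K$ has depth $3$. Since $H_n \| K$ is a Frobenius (hence QF) extension with $(H_n)_K$ a generator, Theorem~\ref{thm-twothree} converts depth $3$ of $H_n \| K$ into depth $2$ of $\End (H_n)_K \| K$, and this endomorphism ring is exactly the tower algebra $H_{2n}$. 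This produces a tower algebra $H_m$, namely $m = 2n$, with $K \subseteq H_m$ of depth $2$.

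For the converse I would argue directly with bimodules, which also disposes of the parity of $m$. If $K \subseteq H_m$ has depth $2$ for some $m \geq 1$, then $H_{2m} \cong H_m \o_K H_m \h H_m$ as $K$-bimodules, so $H_{2m} \| H_m^q$ for some $q$; combining this with the always-available split inclusion $H_{m+1} \| H_{2m}$ coming from the classical face and degeneracy maps (as recalled in the subring Example of Section~2) gives $H_{m+1} \| H_m^q$ as $K$-bimodules, which is exactly the depth $2m+1$ condition~(\ref{eq: d2n+1}). Hence $K \subseteq H$ has finite depth. For even $m = 2n$ one may instead run the converse halves of Theorem~\ref{thm-twothree} and Proposition~\ref{prop-d3} backwards, the former using the generator hypothesis.

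The main obstacle I anticipate is not a single hard estimate but the bookkeeping needed to keep the hypotheses of the cited results in force along the entire tower: checking that $H \| K$ is genuinely Frobenius with $H_K$ a generator, that both the Frobenius and the generator properties are inherited by every composite $H_n \| K$ (so that Theorem~\ref{thm-twothree} applies with $R = H_n$), and that $\End (H_n)_K \cong H_{2n}$ is an isomorphism of the \emph{ring} structures and not merely of underlying bimodules. The generator assumption enters essentially in the converse half of Theorem~\ref{thm-twothree}, and it is precisely there that the argument would fail for a general QF extension.
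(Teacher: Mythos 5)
Your proposal is correct and takes essentially the same route as the paper, whose proof is simply the one-line citation of \cite[8.6]{K2008}, Proposition~\ref{prop-d3} and Theorem~\ref{thm-twothree}: you supply precisely the bookkeeping those citations presuppose (unimodularity forcing $\beta = \id_K$ hence a Frobenius extension, Nichols--Zoeller giving the generator condition, and $\End (H_n)_K \cong H_{2n}$ as rings). Your direct face/degeneracy argument for the converse, yielding $H_{m+1} \| H_{2m} \| H_m^q$ as $K$-bimodules, is a minor variant that conveniently handles all $m$ and avoids the generator hypothesis in that direction, but it does not change the overall approach.
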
 

In practice, the depth is $n$ or less if $m \geq n-1$ where $H_m$ denotes $H \o_K \cdots \o_K H$
($m$ times $H$); cf.\ \cite[Theorem 3.14]{BKK}.  In particular, when $H = k[G]$ and $K = k[J]$ are group algebras of a subgroup pair
$G \geq J$, $K \subseteq H_m$ has depth $2$ for some $m > 2[G: N_G(J)]$ \cite{BDK}.  

\subsection{Acknowledgements}  The author thanks Sebastian Burciu, Mio Iovanov, Christian and Paula  Lomp for interesting conversations.

\end{document}